	\renewcommand{\upchars@}{%
		\def\ss{SS}
		\def\i{I}
		\def\j{J}
		\def\ae{\AE}
		\def\oe{\OE}
		\def\o{\O}
		\def\aa{\AA}
		\def\l{\L}
		\def\Mc@{M{\scshape c}}}
	\providecommand{\Mc@}{Mc}
	\renewcommand{\implies}{\mspace{5mu plus 0mu minus 5mu}\Longrightarrow\mspace{5mu plus 0mu minus 5mu}}
	\newcommand{\flsubstack}[1]{\subarray{l}#1\endsubarray}
\newtheorem{thm}{Theorem}[section]
\newtheorem*{thm*}{Theorem}
\newtheorem{lem}[thm]{Lemma}
\newtheorem*{lem*}{Lemma}
\newtheorem{prop}[thm]{Proposition}
\newtheorem*{prop*}{Proposition}
\let\left\mleft
\let\right\mright
\let\Owidehat\widehat
\renewcommand{\widehat}[1]{\vphantom{#1}\smash{\Owidehat{#1}}}
\let\Owidetilde\widetilde
\renewcommand{\widetilde}[1]{\vphantom{#1}\smash{\Owidetilde{#1}}}
\begin{document}
\title[On the spectra of separable 2D almost Mathieu operators]{On the spectra of separable 2D almost Mathieu operators}
\author[A.~Takase]{Alberto Takase}
\address{Department of Mathematics, University of California, Irvine, CA~92697, USA}
\email{atakase@uci.edu}
\begin{abstract}
We consider separable 2D discrete Schrödinger operators generated by 1D almost Mathieu operators.
For fixed Diophantine frequencies we prove that for sufficiently small couplings the spectrum must be an interval.
This complements a result by J.~Bourgain establishing that for fixed couplings the spectrum has gaps for some (positive measure) Diophantine frequencies.
Our result generalizes to separable multidimensional discrete Schrödinger operators generated by 1D quasiperiodic operators whose potential is analytic and whose frequency is Diophantine.
The proof is based on the study of the thickness of the spectrum of the almost Mathieu operator, and utilizes the Newhouse Gap Lemma on sums of Cantor sets.
\end{abstract}
\date{\today}
\thanks{The project was supported in part by the NSF grant DMS--1855541 (PI - A.\ Gorodetski).}
\maketitle
\section{Introduction}\label{sec: Introduction}
\noindent%
The \textit{almost Mathieu operator} is the discrete Schrödinger operator $ H_{\lambda,\alpha,\omega} $ on $ \ell^2(\mathbb{Z}) $ defined by
\[
\textstyle[H_{\lambda,\alpha,\omega}\psi](n)=\psi(n+1)+\psi(n-1)+2\lambda\cos(2\pi(n\alpha+\omega))\psi(n)
\]
for every $ \psi\in\ell^2(\mathbb{Z}),n\in\mathbb{Z} $.
Here $ \lambda\in\mathbb{R} $ is the \textit{coupling}, $ \alpha\in\mathbb{R}/\mathbb{Z}\eqqcolon \mathbb{T} $ is the \textit{frequency}, and $ \omega\in\mathbb{T} $ is the \textit{phase}.
The operator has its origins in solid-state physics and the study of electrons.
The operator also has a connection to the quantum Hall effect---a Nobel-prize-worthy discovery by K.~von~Klitzing, G.~Dorda, and M.~Pepper in 1980 \cite{1980QuantizedHallEffect}.
The connection was made by D.~J.~Thouless, M.~Kohmoto, M.~P.~Nightingale, and M.~den~Nijs in 1982 \cite{1982QuantizedHallEffect}.
For a discussion of discrete Schrödinger operators and the almost Mathieu operator see the 2019 preprint by S.~Jitomirskaya \cite{2019AMO}.
For more discussions see \cite{2019QPSOp,2017DamanikReviewPaper,2017MarxJitomirskayaReviewPaper,2007JitomirskayaReviewPaper,2005LastReviewPaper,2005Bourgain,1982SimonReviewPaper,1982BellissardReviewPaper}.
\par
P.~G.~Harper in 1955 \cite{1955HarperPhysicsPaper1of2,1955HarperPhysicsPaper2of2} under the tutelage of R.~E.~Peierls established that electrons in a cubic lattice and under a magnetic field have nondiscrete nonevenly-spaced broadened energy values.
Harper used a tight-binding approximation now called the Harper model.
The Schrödinger operator governing the spectrum of the Harper model is the almost Mathieu operator $ H_{\lambda,\alpha,\omega} $ with $ \lambda=1 $.
Physicists sought to better understand the topological structure of the spectrum $ \Sigma_{\lambda,\alpha,\omega} $ of $ H_{\lambda,\alpha,\omega} $.
M.~Azbel in 1964 \cite{1964AzbelPhysicsPaper} conjectured and D.~Hofstadter in 1976 \cite{1976HofstadterPhysicsPaper} computationally supported that $ \Sigma_{\lambda,\alpha,\omega} $ has the characteristic of being either band-like for rational $ \alpha $ or fractal-like for irrational $ \alpha $.
Decades later A.~Avila and S.~Jitomirskaya in 2009 \cite{2009TheTenMartiniProblem} made the final contribution to the resolution of the Ten Martini Problem which sought to confirm the conjectured topological structure of $ \Sigma_{\lambda,\alpha,\omega} $.
If $ \alpha=\frac{p}{q}\in\mathbb{T}\cap\mathbb{Q} $, then $ \Sigma_{\lambda,\alpha,\omega} $ is a disjoint union of at most $ q $-many compact intervals.
If $ \alpha\in \mathbb{T}\setminus\mathbb{Q} $, then $ \Sigma_{\lambda,\alpha,\omega} $ is a Cantor set.
Also, $ \Sigma_{\lambda,\alpha,\omega} $ is independent of the phase $ \omega $ when the frequency $ \alpha $ is irrational.
This last fact can be found in the 1982 review paper by B.~Simon \cite{1982SimonReviewPaper} (where the name of the almost Mathieu operator was introduced) and applies to operators with quasiperiodic potentials; the definition of quasiperiodic potentials can be found within \textsc{subsection~\ref{subsec: Quasiperiodic Potentials and The Almost Mathieu Operator}}.
In general, phase independence of the spectrum for operators with quasiperiodic potentials follows from the fact that the underlying topological dynamical system $ (\mathbb{T}^b,\mathbb{Z}) $ is minimal \cite{2017DamanikReviewPaper}.
\par
Let $ d\ge 2 $ be a positive integer.
We consider the $ d $-dimensional analog operator $ \vphantom{H}\smash{\Owidehat{H}} $ generated by almost Mathieu operators.
Specifically, $ \vphantom{H}\smash{\Owidehat{H}} $ is the discrete Schrödinger operator on $ \ell^2(\mathbb{Z}^d) $ defined by
\[
\textstyle[\vphantom{H}\smash{\Owidehat{H}}\psi](n)=\big(\sum_{m\in \left\{e_1,\ldots,e_d\right\}}\psi(n+m)+\psi(n-m)\big)+\big(\sum_{k\in\left\{1,\ldots,d\right\}}2\lambda_k\cos(2\pi(n_k\alpha_k+\omega_k))\big)\psi(n)
\]
for every $ \psi\in\ell^2(\mathbb{Z}^d),n\in\mathbb{Z}^d $; $ \left\{e_1,\ldots,e_d\right\} $ is the standard basis.
The operator $ \vphantom{H}\smash{\Owidehat{H}} $ and the spectrum $ \vphantom{\Sigma}\smash{\Owidehat{\Sigma}}=\Sigma_{\lambda_1,\alpha_1,\omega_1}+\cdots+\Sigma_{\lambda_d,\alpha_d,\omega_d} $ are the main objects of study in this paper.
The theory of the almost Mathieu operator provides insight into the theory of $ \vphantom{H}\smash{\Owidehat{H}} $.
Consider the following theorem established by A.~Avila and D.~Damanik in 2008 \cite{2008AbsoluteContinuity}: If $ \lambda\ne 1 $ and $ \alpha\in\mathbb{T}\setminus\mathbb{Q} $, then the integrated density of states of the almost Mathieu operator is absolutely continuous.
Along with a theorem by Steinhaus, which states that the sum of two sets with positive Lebesgue-measure contains an open interval, we immediately obtain the following proposition.
\begin{prop}\label{prop: PROP}
Assume at least two among $ \lambda_1,\ldots,\lambda_d $ are not equal to $ 1 $ and $ \alpha_1,\ldots,\alpha_d\in\mathbb{T}\setminus\mathbb{Q} $.
Then $ \vphantom{\Sigma}\smash{\Owidehat{\Sigma}} $ has a dense interior.
\end{prop}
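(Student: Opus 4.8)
The plan is to obtain this ``immediately'', as promised, from the quoted absolute-continuity theorem together with Steinhaus's theorem, with one extra observation needed to upgrade \emph{nonempty interior} to \emph{dense interior} (i.e.\ to show $\operatorname{int}\widehat{\Sigma}$ is dense in $\widehat{\Sigma}$). Throughout abbreviate $\Sigma_k:=\Sigma_{\lambda_k,\alpha_k,\omega_k}$, and recall the standard fact that for an ergodic Schrödinger operator the spectrum coincides with the topological support of the integrated density of states; thus $\Sigma_k=\operatorname{supp}N_k$, where $N_k$ denotes the Borel probability integrated density of states of $H_{\lambda_k,\alpha_k,\omega_k}$.

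The first step is a local positive-measure statement: if $\lambda\neq1$ and $\alpha\in\mathbb{T}\setminus\mathbb{Q}$, then every nonempty relatively open subset of $\Sigma_{\lambda,\alpha,\omega}$ has positive Lebesgue measure. Indeed, such a subset contains a set of the form $\Sigma_{\lambda,\alpha,\omega}\cap I$ with $I$ an open interval meeting $\operatorname{supp}N_{\lambda,\alpha,\omega}$, so that $N_{\lambda,\alpha,\omega}(\Sigma_{\lambda,\alpha,\omega}\cap I)=N_{\lambda,\alpha,\omega}(I)>0$; since $N_{\lambda,\alpha,\omega}$ is absolutely continuous with respect to Lebesgue measure by the cited theorem, this forces $|\Sigma_{\lambda,\alpha,\omega}\cap I|>0$. (The case $I=\mathbb{R}$ already gives $|\Sigma_{\lambda,\alpha,\omega}|>0$.)

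Now fix two indices $i\neq j$ with $\lambda_i\neq1$ and $\lambda_j\neq1$, and write $\widehat{\Sigma}=(\Sigma_i+\Sigma_j)+C$ where $C:=\sum_{k\notin\{i,j\}}\Sigma_k$ is a nonempty compact set. Given $a\in\Sigma_i$, $b\in\Sigma_j$ and $\varepsilon>0$, the sets $\Sigma_i\cap(a-\varepsilon,a+\varepsilon)$ and $\Sigma_j\cap(b-\varepsilon,b+\varepsilon)$ are measurable with positive Lebesgue measure by the first step, so Steinhaus's theorem yields a nondegenerate open interval inside their sum; this interval is contained in $\Sigma_i+\Sigma_j$ and lies within $2\varepsilon$ of $a+b$, and translating it by any $c\in C$ keeps it inside $(\Sigma_i+\Sigma_j)+C=\widehat{\Sigma}$ and within $2\varepsilon$ of $a+b+c$. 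Since every point of $\widehat{\Sigma}$ has the form $a+b+c$, this shows $\operatorname{int}\widehat{\Sigma}$ is dense in $\widehat{\Sigma}$. The step to watch is the local positive-measure statement (this is what forces the hypothesis to demand \emph{two} couplings different from $1$, and what makes it necessary to localize before invoking Steinhaus); the support-of-IDS identity, the sum decomposition of $\widehat{\Sigma}$, and the persistence of an interval under adding a compact set are all routine.
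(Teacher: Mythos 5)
Your proposal is correct and takes essentially the same route as the paper's own argument: localize near an arbitrary point of $\widehat{\Sigma}$, use that each $\Sigma_k$ is the topological support of its density of states measure together with the Avila--Damanik absolute continuity (valid since $\lambda\ne 1$, $\alpha$ irrational) to get positive Lebesgue measure for the two distinguished localized pieces, apply Steinhaus, and translate the resulting interval by the remaining summands to conclude density of the interior. The only cosmetic difference is that the paper localizes all $d$ factors to balls of radius $\varepsilon/d$ and sums them, whereas you keep $C=\sum_{k\notin\{i,j\}}\Sigma_k$ whole and translate by its points.
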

\noindent%
More can be obtained in the small coupling and Diophantine frequency regime.
This is the context of the main theorem in this paper.
\begin{thm}\label{thm: MT}
Assume $ \alpha_1,\ldots,\alpha_d $ are irrational and satisfy a Diophantine condition.
There exists $ \varepsilon=\varepsilon(\alpha_1,\ldots,\alpha_d)>0 $ such that if $ 0<|\lambda_1|,\ldots,|\lambda_d|<\varepsilon $, then $ \vphantom{\Sigma}\smash{\Owidehat{\Sigma}} $ is an interval.
\end{thm}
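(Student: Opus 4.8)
The plan is to reduce the theorem to a single quantitative statement about the Newhouse thickness $ \tau(\cdot) $ of the almost Mathieu spectrum and then feed it into the Newhouse Gap Lemma. Write $ \Sigma_k\coloneqq\Sigma_{\lambda_k,\alpha_k,\omega_k} $, so that $ \widehat{\Sigma}=\Sigma_1+\cdots+\Sigma_d $. For irrational $ \alpha_k $ the set $ \Sigma_k $ is a Cantor set independent of $ \omega_k $ and contained in $ [-2-2|\lambda_k|,\,2+2|\lambda_k|] $; moreover $ H_{\lambda_k,\alpha_k,\omega_k} $ converges in operator norm to the free Laplacian (of spectrum $ [-2,2] $) as $ \lambda_k\to 0 $, so $ \Sigma_k\to[-2,2] $ in the Hausdorff metric and, for $ |\lambda_k| $ small, $ \operatorname{diam}\Sigma_k>3 $ while every bounded gap of $ \Sigma_k $ has length $ <1 $. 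The estimate I would establish is the \emph{thickness estimate}: for every fixed Diophantine $ \alpha $, $ \tau(\Sigma_{\lambda,\alpha,\omega})\to\infty $ as $ \lambda\to 0 $.

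Granting this, I would choose $ \varepsilon=\varepsilon(\alpha_1,\dots,\alpha_d)>0 $ so small that $ 0<|\lambda_k|<\varepsilon $ forces, for every $ k $, both $ \tau(\Sigma_k)>1 $ and $ \operatorname{diam}\Sigma_k\in(3,5) $ with all bounded gaps of $ \Sigma_k $ of length $ <1 $. Then $ \tau(\Sigma_1)\,\tau(\Sigma_2)>1 $. Fix $ t\in[\min\Sigma_1+\min\Sigma_2,\ \max\Sigma_1+\max\Sigma_2] $: the convex hulls of $ \Sigma_1 $ and of $ t-\Sigma_2 $ intersect, and neither set lies in a bounded gap of the other, since such an inclusion would force a bounded gap longer than $ 3 $ in $ \Sigma_1 $ or in $ \Sigma_2 $. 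By the Newhouse Gap Lemma, $ \Sigma_1\cap(t-\Sigma_2)\ne\varnothing $, i.e. $ t\in\Sigma_1+\Sigma_2 $; hence $ I_2\coloneqq\Sigma_1+\Sigma_2 $ is a closed interval with $ |I_2|=\operatorname{diam}\Sigma_1+\operatorname{diam}\Sigma_2>6 $. I then absorb the remaining summands one at a time: for $ k=3,\dots,d $, since $ \operatorname{diam}\Sigma_k<5<|I_{k-1}| $, every bounded gap of $ \Sigma_k $ is shorter than $ |I_{k-1}| $, so $ I_k\coloneqq I_{k-1}+\Sigma_k $ is again a closed interval, of length $ |I_{k-1}|+\operatorname{diam}\Sigma_k $. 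Therefore $ \widehat{\Sigma}=I_d $ is a closed interval, which is the theorem.

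The heart of the matter is the thickness estimate, where the Diophantine condition and the analyticity (here, the explicit cosine) of the potential are used. Recall that $ \tau(C)>\tau_0 $ is equivalent to the statement that any two distinct bounded gaps $ G,G' $ of $ C $ with $ |G'|\ge|G| $ satisfy $ \operatorname{dist}(G,G')>\tau_0|G| $. By the gap-labelling theorem the gaps of $ \Sigma_{\lambda,\alpha,\omega} $ are indexed by $ m\in\mathbb{Z}\setminus\{0\} $, the integrated density of states taking the value $ \{m\alpha\} $ on the gap $ G_m $. The inputs are: (i) two-sided exponential bounds $ c(\lambda)^{|m|}\le|G_m|\le C(\lambda)^{|m|} $ for each open labelled gap, with $ c(\lambda),C(\lambda)\to 0 $ as $ \lambda\to 0 $ and $ \log c(\lambda)/\log C(\lambda) $ bounded (the upper bound from perturbation theory / reducibility of the Schr\"odinger cocycle at Diophantine frequency, the lower bound from the quantitative ``(dry) Ten Martini'' results); (ii) uniform H\"older continuity of the integrated density of states on the spectrum, with exponent $ \gamma>0 $ bounded below for small $ \lambda $; and (iii) the Diophantine inequality $ \|k\alpha\|\ge c_0|k|^{-(1+\tau_0)} $. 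Given two open gaps $ G_m,G_{m'} $ with $ |G_{m'}|\ge|G_m| $, bound (i) forces $ |m-m'|\lesssim|m| $ with a constant uniform in small $ \lambda $; then $ |\{m\alpha\}-\{m'\alpha\}|\ge\|(m-m')\alpha\|\gtrsim|m|^{-(1+\tau_0)} $ by (iii), and (ii) converts this separation of density-of-states values into $ \operatorname{dist}(G_m,G_{m'})\gtrsim|m|^{-(1+\tau_0)/\gamma} $. Comparing with $ |G_m|\le C(\lambda)^{|m|} $, the ratio satisfies $ \operatorname{dist}(G_m,G_{m'})/|G_m|\gtrsim|m|^{-(1+\tau_0)/\gamma}\,C(\lambda)^{-|m|} $, which exceeds any prescribed bound once $ |m| $ is large (uniformly in small $ \lambda $, as exponential growth beats polynomial decay) and, for the finitely many small $ |m| $, because $ |G_m|\to 0 $ as $ \lambda\to 0 $ while the numerator is bounded below independently of $ \lambda $. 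Taking the infimum over all such pairs yields $ \tau(\Sigma_{\lambda,\alpha,\omega})\to\infty $.

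I expect input (i) to be the principal obstacle: a uniform, two-sided exponential control of the individual gap lengths---above all a quantitative lower bound certifying that each labelled gap opens at a definite rate---calls for genuine quasiperiodic-cocycle technology (KAM/reducibility for analytic cocycles, and the Ten-Martini circle of ideas) rather than any soft argument, because merely knowing that $ \Sigma_\lambda $ is Hausdorff-close to $ [-2,2] $, or that the total length of its gaps is $ O(|\lambda|) $, says nothing about thickness---a single gap abutting the edge of the spectrum, or a comparably long gap, already destroys it. A secondary, purely bookkeeping, difficulty is to balance the three exponents---the Diophantine exponent $ \tau_0 $, the H\"older exponent $ \gamma $, and the gap-closing rate---so that the polynomial lower bound on the distance between comparably long gaps always dominates the exponential upper bound on their lengths; the Diophantine hypothesis is precisely what forces comparably long gaps to be widely separated, hence the bridges to be long.
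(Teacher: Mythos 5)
Your top-level reduction is the same as the paper's: establish that $\tau(\Sigma_{\lambda,\alpha,\omega})\to\infty$ as $\lambda\to0$ for Diophantine $\alpha$, note that diameters stay near $4$ while gap lengths shrink, and conclude via a gap lemma together with $\widehat{\Sigma}=\Sigma_1+\cdots+\Sigma_d$ (the paper invokes Astels' $d$-set version, Theorem~\ref{thm: Thickness and Gap Lemmas(9)}, where you instead use the two-set Newhouse lemma and absorb the remaining summands into an interval; that induction is fine and slightly more elementary). The genuine gap is inside your thickness estimate: the ``recall'' that $\tau(C)>\tau_0$ is equivalent to $\mathrm{dist}(G,G')>\tau_0|G|$ for all pairs of bounded gaps with $|G'|\ge|G|$ is false. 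A plank can be terminated by $\inf\Sigma_\lambda$ or $\sup\Sigma_\lambda$ rather than by a longer bounded gap (e.g.\ $[0,1]\cup\{1.01\}$ satisfies your pairwise condition vacuously yet has thickness $0$), and for the almost Mathieu spectrum this case genuinely occurs: the labels $m\alpha-\lfloor m\alpha\rfloor$ are dense in $(0,1)$, so gaps accumulate at both spectral edges and the edge planks must be bounded below. As written, your argument never controls them, and your own closing remark about ``a single gap abutting the edge of the spectrum'' is exactly the scenario left open. The repair is short and uses your same inputs --- the paper (Lemma~\ref{lem: Proof of lem: Proof of thm: Main Theorem(2)(3)(1)}) treats the case $F_\lambda^{(\mathbf{n})}=\sup\Sigma_\lambda$ separately, using $\mathfrak{N}_\lambda=1$ there (resp.\ $0$ at the bottom) so that the IDS jump across the plank is $|n\alpha-\lfloor n\alpha\rfloor-1|\ge c|n|^{-(t-1)}$ by the Diophantine condition, and then Hölder continuity gives the needed lower bound on the plank --- but it must be said.

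A second, comparative point: your input (i) asks for a two-sided exponential bound on gap lengths, in particular a lower bound $c(\lambda)^{|m|}\le|G_m|$ with $\log c(\lambda)/\log C(\lambda)$ bounded, which you rightly identify as the heaviest requirement. The paper shows this is dispensable. In the paper's bookkeeping one only compares $G_\lambda^{(\mathbf{n})}$ with the nearest \emph{longer} gap $G_\lambda^{(\mathbf{m_n})}$ bounding the plank; applying the upper bound to that longer gap gives $\lVert\mathbf{m_n}\rVert\le C_{\mathsf{E}}^{-1}\log\bigl(C(\lambda)/\mathrm{length}(G_\lambda^{(\mathbf{n})})\bigr)\eqqcolon\kappa_\lambda(\mathbf{n})$, and since also $\lVert\mathbf{n}\rVert\le\kappa_\lambda(\mathbf{n})$, the Diophantine--Hölder lower bound on the plank divided by $\mathrm{length}(G_\lambda^{(\mathbf{n})})=C(\lambda)e^{-C_{\mathsf{E}}\kappa_\lambda(\mathbf{n})}$ is at least a constant times $\bigl(C(\lambda)\sup_{x\ge1}e^{-C_{\mathsf{E}}x}(2x)^{(t-1)/h}\bigr)^{-1}$, which blows up as $C(\lambda)\to0$ uniformly in $\mathbf{n}$. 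So no gap-opening (dry Ten Martini type) estimate is needed, only the Leguil--You--Zhao--Zhou upper bound and the Cai--Chavaudret--You--Zhou Hölder bound; this is also what lets the paper extend the result to general analytic quasiperiodic potentials, where uniform lower bounds on individual gaps are not available. With the edge-plank case added and either your two-sided input justified or replaced by this comparison trick, your proof goes through.
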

\noindent%
We mention for comparison a theorem by J.~Bourgain in 2002 \cite{2002SquareAMO}.
\begin{thm}[J.~Bourgain \cite{2002SquareAMO}]\label{thm: B}
Assume $ \lambda_1,\ldots,\lambda_d=\lambda>0 $.
There exist positive Haar-measure sets $ \ooalign{$ A_{\lambda}$\cr
	$\phantom{A}^{(1)}$\cr
	$\phantom{A^{(1)}_{\lambda}}$},\ldots,\ooalign{$ A_{\lambda}$\cr
	$\phantom{A}^{(d)}$\cr
	$\phantom{A^{(d)}_{\lambda}}$}\subseteq\mathbb{T} $ such that if $ \alpha_1\in \ooalign{$ A_{\lambda}$\cr
	$\phantom{A}^{(1)}$\cr
	$\phantom{A^{(1)}_{\lambda}}$},\ldots,\alpha_d\in\ooalign{$ A_{\lambda}$\cr
	$\phantom{A}^{(d)}$\cr
	$\phantom{A^{(d)}_{\lambda}}$} $, then $ \vphantom{\Sigma}\smash{\Owidehat{\Sigma}} $ has gaps.
Furthermore, $ \ooalign{$ A_{\lambda}$\cr
	$\phantom{A}^{(k)}$\cr
	$\phantom{A^{(k)}_{\lambda}}$} $ can be chosen to be a subset of the Diophantine frequencies and lie within $ [0,\varepsilon] $ for any $ \varepsilon>0 $.
\end{thm}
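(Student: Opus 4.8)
\textit{Proof proposal.} The plan is to force a gap in $\widehat{\Sigma}=\Sigma_{\lambda,\alpha_1}+\cdots+\Sigma_{\lambda,\alpha_d}$ (each summand being phase independent since $\alpha_k$ is irrational) out of a single, suitably dominant, gap in each summand, and to produce those summand gaps by approximating a carefully chosen rational frequency. The combinatorial core is elementary: if each compact $K_k\subseteq\mathbb{R}$ splits as $K_k=K_k^-\cup K_k^+$ with $\sup K_k^-<\inf K_k^+$, and if $\operatorname{diam}K_k^+\le\rho_k$ and $\inf K_k^+-\sup K_k^-\ge\delta_k$ with $\sum_k\rho_k<\min_k\delta_k$, then $K_1+\cdots+K_d$ has a gap. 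Indeed, with $M:=\sum_k\sup K_k^+$ (which lies in the sum), any point of the sum all of whose coordinates come from the $K_k^+$ lies in $[M-\sum_k\rho_k,M]$, whereas any point using a coordinate from some $K_k^-$ lies in $(-\infty,M-\min_k\delta_k]$; hence the nonempty open interval $(M-\min_k\delta_k,M-\sum_k\rho_k)$ is a genuine gap (for $d\ge2$ the sum also has points below it). Since $\Sigma_{\lambda,\alpha}=-\Sigma_{\lambda,\alpha}$ one may equally use the top or the bottom edge. Thus it suffices to arrange that each $\Sigma_{\lambda,\alpha_k}$ has a gap $G$ whose length exceeds $d$ times the diameter of the portion of $\Sigma_{\lambda,\alpha_k}$ lying above $G$, with a fixed safety factor.

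The next step is to reduce from irrational to rational frequencies using the (classical) Hausdorff continuity of $\alpha\mapsto\Sigma_{\lambda,\alpha}$. It is enough to exhibit one rational $p_0/q_0\in(0,\varepsilon)$ such that the band–gap structure of $\Sigma_{\lambda,p_0/q_0}$ contains a gap $G_0$ whose length exceeds, say, $2d$ times the diameter of the part of $\Sigma_{\lambda,p_0/q_0}$ above $G_0$. Then there is an open interval $U\subseteq(0,\varepsilon)$ about $p_0/q_0$, small enough that for every $\alpha\in U$ the set $\Sigma_{\lambda,\alpha}$ still has a gap near $G_0$ satisfying the hypothesis of the combinatorial lemma (the Hausdorff error being absorbed by the safety factor), uniformly in $\alpha\in U$ and even when the $d$ frequencies are distinct points of $U$. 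I would then take $A_\lambda^{(1)}=\cdots=A_\lambda^{(d)}:=U\cap\{\alpha:\alpha\text{ Diophantine}\}$; since the Diophantine numbers have full Lebesgue measure this is a positive-measure subset of the Diophantine frequencies contained in $(0,\varepsilon)$, and by the lemma $\widehat{\Sigma}$ has a gap for every $(\alpha_1,\dots,\alpha_d)\in A_\lambda^{(1)}\times\cdots\times A_\lambda^{(d)}$.

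The substantive point is producing the rational $p_0/q_0$. When $\lambda>1$ it is explicit: take $p_0/q_0=1/q_0$ with $q_0$ a large odd integer (so $1/q_0<\varepsilon$, with $q_0$ chosen large in terms of $\lambda$ and $\varepsilon$). The potential $2\lambda\cos(2\pi n/q_0)$ has deep wells, and $\Sigma_{\lambda,1/q_0}$ is a union of at most $q_0$ bands clustered about the $q_0$ values $2\lambda\cos(2\pi j/q_0)$, each band exponentially thin in $q_0$ (of size $O(\lambda^{-cq_0})$ for some $c>0$, since the wells relevant to a given band sit roughly $q_0$ sites apart). Near the top edge, consecutive values $2\lambda\cos(2\pi j/q_0)$ differ by about $\lambda/q_0^2$, so the gap just below the top band has length of order $\lambda/q_0^2$ while the top band has length $O(\lambda^{-cq_0})$; the ratio therefore diverges with $q_0$, and the combinatorial lemma applies with ample room. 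For general $\lambda>0$ one still seeks a rational approximant whose spectrum has a gap dominating the adjacent bridge by a prescribed factor, and one sees this by inspecting $\Sigma_{\lambda,p_0/q_0}$ through one or several renormalization steps, exploiting the self-similar band structure of the Hofstadter spectrum near rationals; this is the technical heart of the argument.

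I expect this last step, for small $\lambda$, to be the main obstacle. In that regime $|\Sigma_{\lambda,\alpha}|=4-4|\lambda|$ is close to the length of the convex hull of $\Sigma_{\lambda,\alpha}$, so the spectrum has no dominant gap whatsoever for a generic Diophantine $\alpha$ — indeed by Theorem~\ref{thm: MT} the sum $\widehat{\Sigma}$ is then an interval. To force the required structure one must push $\alpha$ extraordinarily close to $p_0/q_0$, close enough that the fine bands of $\Sigma_{\lambda,p_0/q_0}$ are faithfully reproduced at the relevant scale inside $\Sigma_{\lambda,\alpha}$, while still keeping $\alpha$ Diophantine and doing so over a set of positive measure. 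Reconciling these competing demands is the delicate part, and it is here that a careful continued-fraction / multi-scale analysis of the almost Mathieu spectrum near rationals is required.
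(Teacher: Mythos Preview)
The paper does not prove this theorem; it is quoted from Bourgain \cite{2002SquareAMO} purely for contrast with Theorem~\ref{thm: MT}, so there is no ``paper's own proof'' to compare against. What you have written is therefore an independent attempt at Bourgain's result.

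Your combinatorial lemma is correct and is a clean way to propagate a dominant edge gap from the summands to the sum, and the reduction via Hausdorff continuity of $\alpha\mapsto\Sigma_{\lambda,\alpha}$ (Avron--van~Mouche--Simon) to a single good rational is a legitimate move, provided you work with the phase-averaged spectrum at the rational. The genuine gap in your argument is exactly the one you flag yourself: you do not actually produce, for general $\lambda>0$, a rational $p_0/q_0$ whose top band is beaten by the adjacent gap by a factor exceeding $d$. For large $\lambda$ your well-localization heuristic is essentially right (though it still needs the band-width estimate $\lesssim\lambda^{-cq_0}$ made rigorous), but for $0<\lambda\le 1$ you only gesture at ``renormalization'' and ``multi-scale analysis'' without carrying anything out. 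This is not a minor omission: it is precisely the regime in which, by Theorem~\ref{thm: MT}, generic Diophantine $\alpha$ give \emph{no} gap in $\widehat\Sigma$, so the frequencies you need are genuinely exceptional and their construction is the whole content of Bourgain's paper. His actual argument is rather different from your outline---it does not isolate a single dominant edge gap but instead controls the band structure of $\Sigma_{\lambda,p/q}$ quantitatively (lower bounds on individual gap lengths coming from the Chambers-formula/transfer-matrix analysis) and then shows that for $\alpha$ in a positive-measure set extremely close to $p/q$ these gaps survive in the $d$-fold sum; the delicate step is exactly the one you anticipate, balancing closeness to $p/q$ against the Diophantine condition, and it is handled by a careful measure estimate rather than a soft continuity argument. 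As it stands, your proposal is a reasonable strategic sketch for $\lambda>1$ and an honest identification of the obstacle for $\lambda\le 1$, but it is not a proof.
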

\par
We say a few words on the proof of Theorem~\ref{thm: MT}.
Observe $ \vphantom{\Sigma}\smash{\Owidehat{\Sigma}} $ is the sum of Cantor spectra $ \Sigma_1+\cdots+\Sigma_d $, where henceforth $ \Sigma_k\coloneqq \Sigma_{\lambda_k,\alpha_k,\omega_k} $.
Indeed, the potential of $ \vphantom{H}\smash{\Owidehat{H}} $ is separable.
In general, the spectra of separable multidimensional operators are sums of the spectra of 1D operators.
This fact can be found within \cite{2011SquareFibonacciHamiltonian}.
See also another proof involving the convolution of density of states measures within \cite{2015SquareFibonacciHamiltonian}.
The Newhouse Gap Lemma may be utilized to guarantee that $ \Sigma_1+\cdots+\Sigma_d $ is an interval thereby establishing Theorem~\ref{thm: MT}.
The notion of thickness, which is a quantitative characterization of nonempty compact subsets $ K $ of $ \mathbb{R} $ often denoted $ \tau(K) $, was utilized by S.~Newhouse in the 1970s \cite{1970Newhouse,1974Newhouse,1979Newhouse} to prove the namesake Newhouse Gap Lemma.
See also for a short proof \texttt{section 4.2} on page 63 within \cite{1993PalisTakens}.
The definition of thickness can be found within \textsc{subsection~\ref{subsec: Thickness and Gap Lemmas}}.
We abridge the Newhouse Gap Lemma:
Let $ K_1 $ and $ K_2 $ be nonempty compact subsets of $ \mathbb{R} $.
Assume the maximal-gap-lengths of $ K_1 $ and $ K_2 $ are sufficiently small relative to the diameters of $ K_1 $ and $ K_2 $, and $ 1\le \tau(K_1)\cdot\tau(K_2) $.
Then $ K_1+K_2 $ is an interval.
S.~Astels in 1999 \cite{1999AstelsGapLemma1of2,1999AstelsGapLemma2of2} generalized the Newhouse Gap Lemma to obtain the following; see Theorem~\ref{thm: Thickness and Gap Lemmas(9)} for the unabridged version.
\begin{thm}[S.~Astels \cite{1999AstelsGapLemma1of2,1999AstelsGapLemma2of2}]
Let $ K_1,\ldots,K_d $ ($ d\ge 2 $) be nonempty compact subsets of $ \mathbb{R} $.
Assume the maximal-gap-lengths of $ K_1,\ldots,K_d $ are sufficiently small relative to the diameters of $ K_1,\ldots,K_d $, and $ 1\le \frac{\tau(K_1)}{\tau(K_1)+1}+\cdots+\frac{\tau(K_d)}{\tau(K_d)+1} $.
Then $ K_1+\cdots+K_d $ is an interval.
\end{thm}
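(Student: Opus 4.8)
The plan is to deduce the statement from the two-set Newhouse Gap Lemma by an induction on $d$, organized around a renormalized thickness that is additive under Minkowski sums. First I would dispose of the degenerate cases: if some $K_k$ already contains an interval then $K_1+\cdots+K_d$ contains a translate of it and there is nothing to prove, and if the gap-length hypothesis forces some $K_k$ to be a single point the claim is trivial; so assume every $K_k$ is a genuine Cantor set with $\tau(K_k)\in(0,\infty)$, and put
\[
\gamma(K)\ \coloneqq\ \frac{\tau(K)}{\tau(K)+1}\ \in\ (0,1).
\]
A one-line computation shows $\tau(K_1)\tau(K_2)\ge 1$ iff $\gamma(K_1)+\gamma(K_2)\ge 1$. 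Since, in addition, the hypothesis that the maximal gaps of $K_1$ and $K_2$ be small relative to $\operatorname{diam}K_1$ and $\operatorname{diam}K_2$ is precisely what prevents $K_1$ and $K_2$ from being linked, the case $d=2$ of the theorem is exactly the Newhouse Gap Lemma. Thus the entire problem is to go from $d=2$ to arbitrary $d$; we may assume $d\ge 3$.

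The crux — and the step I expect to be the main obstacle — is the following estimate: under the same kind of gap-length hypotheses, $K_1+K_2$ is either an interval or a Cantor set $L$ with
\[
\gamma(L)\ \ge\ \gamma(K_1)+\gamma(K_2),
\]
and in the second case the maximal gap of $L$ is again small relative to $\operatorname{diam}L$. (When $\gamma(K_1)+\gamma(K_2)\ge 1$ one is forced into the interval alternative, consistently with the $d=2$ result above, so the content is the inequality when $\gamma(K_1)+\gamma(K_2)<1$.) To prove it I would assemble a defining nested sequence of bridges and gaps for $K_1+K_2$ by merging those of $K_1$ and $K_2$ in order of decreasing gap length. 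Whenever the gap being deleted comes from $K_1$, the summand $K_2$ is solid at that scale, so each of the two bridges of $K_1+K_2$ flanking that gap contains a bridge of $K_1$ translated by an entire bridge of $K_2$ present at that stage (or by $\operatorname{diam}K_2$ if no finer bridge of $K_2$ remains), and symmetrically when the gap comes from $K_2$. Bounding the resulting bridge-to-gap ratios and rewriting them in the renormalized variable — where the ``$+1$'' exactly books the extra length supplied by the other summand — produces the displayed inequality; the delicate points are making the merge and the ratio bounds uniform across all stages of the construction while simultaneously tracking the largest surviving gap, which is what the gap-length hypothesis controls.

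Granting this estimate, the induction is immediate. Set $L_1\coloneqq K_1$ and $L_j\coloneqq L_{j-1}+K_j$ for $2\le j\le d$. Using the estimate at each step, one checks by induction that for every $j$ either $L_j$ is an interval, or $L_j$ is a Cantor set with $\gamma(L_j)\ge\gamma(K_1)+\cdots+\gamma(K_j)$ and with maximal gap small relative to $\operatorname{diam}L_j$. Apply this with $j=d-1$. If $L_{d-1}$ is an interval, then the two-set Newhouse Gap Lemma applied to $L_{d-1}$ and $K_d$ (with $\tau(L_{d-1})=\infty$) gives that $L_d$ is an interval. Otherwise $\gamma(L_{d-1})\ge\gamma(K_1)+\cdots+\gamma(K_{d-1})\ge 1-\gamma(K_d)$ by the hypothesis $\gamma(K_1)+\cdots+\gamma(K_d)\ge 1$, so $\tau(L_{d-1})\tau(K_d)\ge 1$; since $L_{d-1}$ and $K_d$ also meet the gap-length requirement, the two-set Newhouse Gap Lemma shows that $L_d=K_1+\cdots+K_d$ is an interval. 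This is the assertion.
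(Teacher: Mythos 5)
First, a point of comparison: the paper does not prove this statement at all --- it is quoted from Astels \cite{1999AstelsGapLemma1of2,1999AstelsGapLemma2of2} (the unabridged form is Theorem~\ref{thm: Thickness and Gap Lemmas(9)}, also cited), so your attempt has to stand on its own. Judged that way, the surrounding scaffolding of your argument is sound and is in fact how the $d$-set case does follow from the two-set case: with $\gamma(K)=\tau(K)/(\tau(K)+1)$, your equivalence $\tau(K_1)\tau(K_2)\ge 1\iff\gamma(K_1)+\gamma(K_2)\ge 1$ is correct, diameters add under Minkowski sums so $\Gamma(K_j)\le\mathrm{diam}(K_1)+\cdots+\mathrm{diam}(K_{j-1})=\mathrm{diam}(L_{j-1})$, and the gap bookkeeping can be closed via the (easy, and worth stating explicitly) inequality $\Gamma(K_1+K_2)\le\max\{\Gamma(K_1),\Gamma(K_2)\}$, which gives $\Gamma(L_{j-1})\le\max_{i<j}\Gamma(K_i)\le\mathrm{diam}(K_j)$; note that what the induction needs is this absolute bound, not your weaker formulation ``the maximal gap of $L$ is small relative to $\mathrm{diam}\,L$''.

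The genuine gap is the ``crux'' you flag and then only sketch: the superadditivity estimate $\gamma(K_1+K_2)\ge\min\{1,\gamma(K_1)+\gamma(K_2)\}$ under the gap/diameter hypotheses. That estimate is not an auxiliary step --- it is precisely the quantitative content of Astels' theorem (the second alternative of Theorem~\ref{thm: Thickness and Gap Lemmas(9)} is exactly $\gamma(K_1+\cdots+K_d)\ge\gamma(K_1)+\cdots+\gamma(K_d)$ when the sum is less than $1$), so your proposal in effect reduces the theorem to its hardest special case. The merge-the-gaps-in-decreasing-order sketch points in the right direction, but the places you defer (``making the merge and the ratio bounds uniform across all stages while tracking the largest surviving gap'', gaps of equal length, stages at which one summand has no bridges left at the relevant scale) are exactly where the work lies, and no argument is given for them. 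Two smaller defects in the framing: the reduction ``if some $K_k$ contains an interval there is nothing to prove'' is invalid --- containing an interval neither makes the sum an interval nor even forces $\tau(K_k)>0$ (consider $[0,1]\cup\{2\}$, which has thickness $0$) --- and the dichotomy ``$K_1+K_2$ is an interval or a Cantor set'' is false in general (sums can be finite unions of intervals or Cantorvals); neither reduction is needed, since thickness, $\Gamma$, and the Gap Lemma as stated apply to arbitrary nonempty compact sets, but as written these steps are wrong. The right dichotomy is simply ``interval, or possessing a bounded gap''. Also, in the final step the case $\tau(K_d)=0$ with $L_{d-1}$ an interval makes the hypothesis $1\le\tau(L_{d-1})\cdot\tau(K_d)$ of Theorem~\ref{thm: Thickness and Gap Lemmas(7)} indeterminate; that edge case is easy to handle directly (an interval plus a compact set whose gaps are no longer than the interval is an interval), but it should be handled.
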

\noindent%
Because $ \lambda_k $ is small, one can think of $ H_{\lambda_k,\alpha_k,\omega_k} $ as a perturbation of the discrete Laplacian whose spectrum is the interval $ [-2,2] $.
Specifically, $ \Sigma_k $ converges to $ [-2,2] $ in the Hausdorff metric as $ \lambda_k\to 0 $.
Also, the diameter of $ \Sigma_{k} $ converges to the diameter of $ [-2,2] $ as $ \lambda_k\to 0 $.
Indeed, $ |\mathrm{diam}\,\sigma(A)-\mathrm{diam}\,\sigma(B)|\le 2\,\mathrm{dist}_{\mathsf{Haus}}(\sigma(A),\sigma(B))\le 2\left\lVert A-B\right\rVert $ for bounded self-adjoint operators $ A $ and $ B $.
Therefore $ \Sigma_k $ has a small maximal-gap-length relative to the diameters of $ \Sigma_1,\ldots,\Sigma_d $.
As a result, to utilize the Newhouse Gap Lemma it is enough to establish that the thickness of the spectrum of the almost Mathieu operator approaches infinity as the coupling approaches zero.
\begin{thm}\label{thm: ITP}
The spectrum of the almost Mathieu operator $ H_{\lambda,\alpha,\omega} $ with irrational frequency fixed and assumed to satisfy a Diophantine condition
\[\textstyle \alpha\in\bigcup_{\substack{c>0\\t>1}}\bigcap_{\frac{p}{q}\in\mathbb{Q}}\left\{x\in\mathbb{R}:|qx-p|\ge \frac{c}{|q|^{t-1}}\right\}\mathrlap{{}\eqqcolon \mathsf{DC}} \]
has thickness approaching infinity as the coupling $ \lambda $ approaches zero i.e.\
\[\left(\forall \alpha\in \mathsf{DC}\right)[\textstyle \lim_{\lambda\to 0}\tau(\Sigma_{\lambda,\alpha,\omega})=+\infty]. \]
\end{thm}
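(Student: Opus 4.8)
The plan is to bound $\tau(\Sigma_{\lambda,\alpha})$ gap by gap, using the gap labelling of the almost Mathieu operator together with two quantitative features of the small-coupling Diophantine regime. Recall that $\Sigma_{\lambda,\alpha}$ carries a canonical family of (possibly collapsed) gaps $U_k$, $k\in\mathbb{Z}\setminus\{0\}$, on which the integrated density of states $N$ takes the constant value $\{k\alpha\}$, while $N=0$ and $N=1$ mark the spectral edges. As $\alpha\in\mathsf{DC}$ one has $\|k\alpha\|\ge c|k|^{-(t-1)}$, so the heights $\{k\alpha\}$ are $c|k|^{-(t-1)}$-separated and, for bounded $|k|$, bounded away from $\{0,1\}$. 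The two inputs are: \textbf{(a)} a uniform exponential-in-label gap bound $|U_k|\le C_0|\lambda|^{\kappa|k|}$ for some $\kappa>0$ and all small $|\lambda|$ (small-coupling perturbation theory, or almost reducibility of the subcritical cocycle for Diophantine $\alpha$); and \textbf{(b)} quantitative regularity of the density of states: $N$ is absolutely continuous (Avila--Damanik) and, for small $|\lambda|$, its density is bounded above by a $\lambda$-independent multiple of the free density $\tfrac{1}{\pi}(4-E^2)^{-1/2}$ -- equivalently $\tfrac{dE}{dN}\gtrsim\min(N,1-N)$ on the spectrum -- which follows from almost reducibility, or from comparison with the free operator together with the identity $|\Sigma_{\lambda,\alpha}|=4(1-|\lambda|)$.

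Granting these, fix a non-collapsed gap $U=U_k$ and one of its endpoints. The Newhouse bridge there -- obtained after deleting every gap of length $\ge|U|$ -- runs from that endpoint to the nearest strictly longer gap $U_j$ (or to the spectral edge, a case that only improves the estimate below); put $M=\max(|k|,|j|)$. By (a) applied to both labels, $|U|\le C_0|\lambda|^{\kappa M}$. On the other hand the bridge contains the entire spectral portion between $U_k$ and $U_j$, whose image under $N$ is an interval of length $|\{k\alpha\}-\{j\alpha\}|\ge\|(k-j)\alpha\|\ge c(2M)^{-(t-1)}$ lying at distance $\ge cM^{-(t-1)}$ from each of $0$ and $1$; integrating (b) over this interval bounds the bridge length below by a constant times $M^{-2(t-1)}$. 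Hence
\[
\frac{|\text{bridge}|}{|U|}\;\gtrsim\;\frac{1}{M^{2(t-1)}\,|\lambda|^{\kappa M}},
\]
and since $x\mapsto x^{2(t-1)}|\lambda|^{\kappa x}$ is decreasing on $[1,\infty)$ once $|\lambda|<e^{-2(t-1)/\kappa}$, the right-hand side is minimised at $M=1$, where it equals $|\lambda|^{-\kappa}$. Taking the infimum over all gaps and endpoints gives $\tau(\Sigma_{\lambda,\alpha})\gtrsim|\lambda|^{-\kappa}\to+\infty$ as $\lambda\to0$.

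The main obstacle is supplying the two inputs with constants independent of the label and of $\lambda$: the exponential-in-label gap estimate (a), and especially the edge regularity in (b) -- i.e.\ controlling how fast the density of states may blow up at $E=\pm2$ as $\lambda\to0$ -- which is exactly where the Diophantine constants $c,t$ enter and determine the dependence of $\varepsilon$ on $\alpha$. A more self-contained variant replaces (a)--(b) by periodic approximants: Chambers' relation gives $\bigcup_{\omega}\Sigma_{\lambda,p/q,\omega}=\Delta_{p/q}^{-1}\big([-2-2|\lambda|^{q},\,2+2|\lambda|^{q}]\big)$ with $\Delta_{p/q}$ a monic degree-$q$ polynomial equal to $2T_q(E/2)+\lambda^{2}R_q(E)+O(\lambda^{4})$, and a Taylor expansion at the Chebyshev critical points $2\cos(k\pi/q)$ -- where the zero mean of the potential kills the first-order term -- shows that each gap of the periodic spectrum has length $O(|\lambda|)$ times that of an adjacent band, the edge-degeneracy factor $\sin(k\pi/q)$ cancelling in the ratio. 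Transferring this to irrational $\alpha$ through the Avron--van Mouche--Simon estimate $\mathrm{dist}_{\mathsf{Haus}}(\Sigma_{\lambda,\alpha},\Sigma_{\lambda,p_n/q_n})\le C|\lambda|^{1/2}\|\alpha-p_n/q_n\|^{1/2}$, with $\mathsf{DC}$ bounding $q_{n+1}$ polynomially in $q_n$, again yields $\tau(\Sigma_{\lambda,\alpha})\gtrsim|\lambda|^{-1}$; the price is the uniform-in-$q$ control of $R_q$ near $E=\pm2$, where the one-step transfer matrices cease to be elliptic, which one expects to absorb by noting that bands and gaps degenerate at the same rate there.
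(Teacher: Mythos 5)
Your overall architecture is the same as the paper's proof (Lemma~\ref{lem: Proof of lem: Proof of thm: Main Theorem(2)(3)(1)}): gap labelling plus the Diophantine separation of the labels $\{k\alpha\}$, an exponential-in-label bound on gap lengths whose prefactor vanishes as $\lambda\to0$, and a quantitative regularity statement for the IDS, assembled into a plank-versus-gap estimate and optimized over labels exactly as you do with $M=\max(|k|,|j|)$. The genuine gap is your input \textbf{(b)}. The pointwise claim that the density of states is bounded by a $\lambda$-independent multiple of the free density, equivalently $dE/dN\gtrsim\min(N,1-N)$ on the spectrum, is not correct for the almost Mathieu operator at small coupling: at the edges of each open interior gap the IDS has square-root type behavior, so $dN/dE$ blows up at energies where the free density is finite; and neither almost reducibility as invoked nor the identity $|\Sigma_{\lambda,\alpha}|=4-4|\lambda|$ (a statement about total measure only) yields any pointwise density bound. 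What your argument actually uses is only the integrated consequence (a lower bound on $F-E$ for $E,F\in\Sigma_\lambda$ in terms of $\mathfrak{N}_\lambda(F)-\mathfrak{N}_\lambda(E)$), and that is precisely the uniform $\tfrac12$-H\"older continuity $|\mathfrak{N}_\lambda(x)-\mathfrak{N}_\lambda(y)|\le C_{\mathsf{H}}|x-y|^{1/2}$ with $C_{\mathsf{H}}$ independent of small $\lambda$ (Theorem~\ref{thm: Integrated Density of States(3)}, after Cai--Chavaudret--You--Zhou), which gives bridge length $\ge (c/C_{\mathsf{H}})^{2}(2M)^{-2(t-1)}$ --- the same power of $M$ you obtain --- and handles the spectral-edge case (where your $N$-interval touches $0$ or $1$) without further comment. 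Similarly, your input \textbf{(a)} in the strong form $|U_k|\le C_0|\lambda|^{\kappa|k|}$ is asserted rather than proved or cited; the weaker, citable form $|U_k|\le C(\lambda)e^{-C_{\mathsf{E}}|k|}$ with $C(\lambda)\to0$ (Theorem~\ref{thm: Spectral Gaps(7)}, Leguil--You--Zhao--Zhou) suffices and, run through your own optimization over $M$, yields $\tau(\Sigma_\lambda)\gtrsim 1/C(\lambda)\to+\infty$, which is the paper's conclusion; the rate $|\lambda|^{-\kappa}$ you advertise is not needed.

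The ``self-contained variant'' does not close as sketched. The Avron--van~Mouche--Simon estimate controls only the Hausdorff distance between $\Sigma_{\lambda,\alpha}$ and the periodic spectra, and Hausdorff closeness transfers neither thickness nor band-to-gap ratios: every gap of the irrational spectrum whose length is below the Hausdorff error is invisible to the approximant, and these small, high-label gaps are exactly where the thickness infimum must be controlled. Making them visible uniformly in the label is precisely the content of the label-wise gap estimates in \textbf{(a)}, so this route does not avoid them; the uniform-in-$q$ control of your correction term $R_q$ near $E=\pm2$ is a further unaddressed difficulty. I would therefore discard the variant and repair the first argument as indicated, after which it coincides with the paper's proof.
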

\par
Theorem~\ref{thm: ITP} extends to 1D quasiperiodic operators whose potential is analytic and whose frequency is Diophantine; combine Theorem~\ref{thm: Integrated Density of States(3)} and Theorem~\ref{thm: Spectral Gaps(7)} and Lemma~\ref{lem: Proof of lem: Proof of thm: Main Theorem(2)(3)(1)}.
As a result, Theorem~\ref{thm: MT} extends to separable multidimensional discrete Schrödinger operators generated by 1D quasiperiodic operators whose potential is analytic and whose frequency is Diophantine.
\begin{thm}\label{thm: THEOREM}
For fixed Diophantine frequencies $ \alpha_1,\ldots,\alpha_d $, the spectrum is an interval for separable \mbox{$ d $-dimensional} discrete Schrödinger operators generated by 1D $ \alpha_k $-quasiperiodic analytic operators $ H_k=\Delta+V_k $ with $ \alpha_k $-dependent sufficiently small norms $ \lVert V_k\rVert $.
\end{thm}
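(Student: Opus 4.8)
\noindent
The plan is to deduce Theorem~\ref{thm: THEOREM} from the unabridged Astels Gap Lemma (Theorem~\ref{thm: Thickness and Gap Lemmas(9)}), exactly as Theorem~\ref{thm: MT} is deduced from it in the almost Mathieu case. Write $ \Sigma_k $ for the spectrum of the $ k $-th one-dimensional factor $ H_k=\Delta+V_k $ (it does not depend on the phase $ \omega_k $, the frequency $ \alpha_k $ being irrational) and $ \widehat{\Sigma} $ for the spectrum of the separable $ d $-dimensional operator. Because the potential is separable, $ \widehat{\Sigma}=\Sigma_1+\cdots+\Sigma_d $; this is the fact cited from \cite{2011SquareFibonacciHamiltonian}, and it can alternatively be seen by convolving the density-of-states measures as in \cite{2015SquareFibonacciHamiltonian}. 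Hence it suffices to verify the two hypotheses of Astels' theorem for the nonempty compact sets $ \Sigma_1,\ldots,\Sigma_d $, provided the norms $ \lVert V_1\rVert,\ldots,\lVert V_d\rVert $ are small enough.

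First I would dispatch the hypothesis that the maximal-gap-lengths are small relative to the diameters. Since $ H_k=\Delta+V_k $ and $ \sigma(\Delta)=[-2,2] $, norm-continuity of the spectrum of bounded self-adjoint operators gives $ \mathrm{dist}_{\mathsf{Haus}}(\Sigma_k,[-2,2])\le\lVert V_k\rVert $, so the maximal-gap-length of $ \Sigma_k $ is at most $ 2\lVert V_k\rVert $ while, by the diameter inequality recalled in the introduction, $ \mathrm{diam}\,\Sigma_k\ge 4-2\lVert V_k\rVert $. Thus once $ \lVert V_1\rVert,\ldots,\lVert V_d\rVert $ are sufficiently small the maximal-gap-lengths are as small as desired compared with $ \mathrm{diam}\,\Sigma_1,\ldots,\mathrm{diam}\,\Sigma_d $, which is all this hypothesis demands.

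Second --- and this is the real content --- I would verify the thickness inequality $ 1\le\sum_{k=1}^{d}\frac{\tau(\Sigma_k)}{\tau(\Sigma_k)+1} $. By the analogue of Theorem~\ref{thm: ITP} for $ \alpha_k $-quasiperiodic analytic operators with $ \alpha_k\in\mathsf{DC} $ --- obtained by combining Theorem~\ref{thm: Integrated Density of States(3)}, Theorem~\ref{thm: Spectral Gaps(7)}, and Lemma~\ref{lem: Proof of lem: Proof of thm: Main Theorem(2)(3)(1)} --- we have $ \tau(\Sigma_k)\to+\infty $ as $ \lVert V_k\rVert\to 0 $. Since $ t\mapsto t/(t+1) $ increases to $ 1 $, each summand tends to $ 1 $ and the whole sum tends to $ d\ge 2 $; in particular it exceeds $ 1 $ once every $ \lVert V_k\rVert $ is below some threshold $ \varepsilon=\varepsilon(\alpha_1,\ldots,\alpha_d)>0 $. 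With $ \lVert V_k\rVert<\varepsilon $ for all $ k $, both hypotheses of Astels' theorem hold, and we conclude that $ \Sigma_1+\cdots+\Sigma_d=\widehat{\Sigma} $ is an interval.

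The main obstacle is concentrated in the generalization of Theorem~\ref{thm: ITP}, namely that $ \tau(\Sigma_k)\to+\infty $ as $ \lVert V_k\rVert\to 0 $. Thickness is bounded above in terms of the lengths of spectral gaps and below in terms of the lengths of the adjacent bridges, so two estimates must cooperate. The first forces the total length of the spectral gaps whose gap-labelling label has large modulus to be uniformly small for small coupling; for analytic potentials with Diophantine frequency this is the role of Theorem~\ref{thm: Spectral Gaps(7)}, in the spirit of the exponential and polynomial gap-length bounds of Avila--Jitomirskaya and Goldstein--Schlag. The second supplies a lower bound on the density-of-states spacing between consecutive admissible labels together with a quantitative modulus of continuity for the integrated density of states, so that small jumps of the IDS translate into short energy-gaps and positive IDS-bridges into long energy-bridges; this is the role of Theorem~\ref{thm: Integrated Density of States(3)}, exploiting that the IDS converges to the explicit IDS of $ \Delta $, whose derivative is bounded above and below on compact subintervals of $ (-2,2) $. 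The spectral edges near $ \pm 2 $, where the limiting IDS degenerates, require a separate argument, but the gaps accumulating there carry large labels and are therefore already absorbed by the first estimate. Lemma~\ref{lem: Proof of lem: Proof of thm: Main Theorem(2)(3)(1)} then packages these estimates into the clean conclusion.
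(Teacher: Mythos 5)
Your proposal follows the paper's own route: separability gives $\widehat{\Sigma}=\Sigma_1+\cdots+\Sigma_d$ (Theorem~\ref{thm: Separable Potentials and The Laplacian(2)}), the gap-length/diameter hypotheses are handled by the perturbative estimate of Lemma~\ref{lem: Proof of thm: Main Theorem(2)(2)}, the thickness condition comes from the quasiperiodic-analytic extension of Theorem~\ref{thm: ITP} obtained by combining Theorem~\ref{thm: Integrated Density of States(3)}, Theorem~\ref{thm: Spectral Gaps(7)} and Lemma~\ref{lem: Proof of lem: Proof of thm: Main Theorem(2)(3)(1)}, and Astels' Gap Lemma (Theorem~\ref{thm: Thickness and Gap Lemmas(9)}) concludes, exactly as in the proof of Theorem~\ref{thm: Proof of thm: Main Theorem(2)(5)}. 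This is correct and essentially identical to the paper's argument (your closing heuristics about the IDS of the Laplacian are not needed, since Lemma~\ref{lem: Proof of lem: Proof of thm: Main Theorem(2)(3)(1)} works directly with the Diophantine lower bound on label differences and the Hölder modulus).
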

Theorem~\ref{thm: THEOREM} is necessarily perturbative due to Theorem~\ref{thm: B}.
Specifically, the smallness of the norm depends on the Diophantine frequency.
We conclude this section with a few words on related mathematical results and questions.
The definitions of \textit{limit-periodic} and \textit{almost-periodic} potential can be found within \cite{2019MultiAPCantorSpectrum} and \cite{1982SimonReviewPaper}, respectively.
It suffices to say that the collection of all almost-periodic potentials is a broad class of potentials which contains the periodic and limit-periodic and quasiperiodic potentials.
The definitions of \textit{box-counting} and \textit{Hausdorff} dimension can be found within \cite{2014Falconer}.
A \textit{Cantorval} is a nonempty compact subset $ C $ of $ \mathbb{R} $ such that $ C $ has no isolated connected components and $ C $ has a dense interior; we mention for comparison that a \textit{Cantor set} is a nonempty compact subset $ C $ of $ \mathbb{R} $ such that $ C $ has no isolated points and $ C $ has no interior points.
\begin{enumerate}[(a)]
\item
We establish a single-interval-characterization for the spectra of separable multidimensional discrete Schrödinger operators generated by 1D quasiperiodic analytic operators $ H_k=\Delta+V_k $ with Diophantine-frequency-dependent sufficiently small norms $ \lVert V_k\rVert $.
The same characterization can be said about periodic $ V_k $ \cite{2018BetheSommerfeldConjecture} and cannot be said about limit-periodic $ V_k $ \cite{2019MultiAPCantorSpectrum}.
\item\label{HJ}
R.~Han and S.~Jitomirskaya \cite{2018BetheSommerfeldConjecture} proved that periodic (not necessarily separable) multidimensional discrete Schrödinger operators have interval spectra when the norm of the potential is small and at least one period is odd.
This is the discrete analog of the L.~Parnovski \cite{2008BetheSommerfeldConjecture} resolution of the continuous Bethe--Sommerfeld Conjecture.
Moreover, the resolution was extended to quasiperiodic (not necessarily separable) multidimensional continuous Schrödinger operators for almost-all frequencies \cite{2020BetheSommerfeldConjectureQPSOp}.
This paragraph is the only place where continuous Schrödinger operators are mentioned, and it is done so to motivate the following question.
Can one extend the \cite{2018BetheSommerfeldConjecture,2008BetheSommerfeldConjecture,2020BetheSommerfeldConjectureQPSOp} results to quasiperiodic (not necessarily separable) multidimensional discrete Schrödinger operators for almost-all frequencies?
Specifically, can one remove the separability condition from Theorem~\ref{thm: THEOREM}?
Moreover, can one remove the Diophantine condition from Theorem~\ref{thm: THEOREM}?
For example, can the frequencies satisfy a Liouvillian condition?
In a sense, this means that the irrational frequencies are well approximated by rational numbers.
\item\label{DFG}
D.~Damanik, J.~Fillman, and A.~Gorodetski \cite{2019MultiAPCantorSpectrum} proved that there exists a dense subset $ \mathcal{B} $ of 1D limit-periodic potentials such that $ \mathcal{B} $-type operators have Cantor spectra with zero (lower) box-counting dimension.
Furthermore, separable multidimensional discrete Schrödinger operators generated by $ \mathcal{B} $-type operators have Cantor spectra with zero (lower) box-counting dimension.
Is it true that the spectra of separable multidimensional discrete Schrödinger operators generated by 1D almost-periodic operators is either a finite union of disjoint intervals or a Cantor set or a Cantorval?
In view of Proposition~\ref{prop: PROP}, does there exist $ \lambda_1,\ldots,\lambda_d $ with at least two not equal to $ 1 $ and $ \alpha_1,\ldots,\alpha_d\in\mathbb{T}\setminus\mathbb{Q} $ such that $ \vphantom{\Sigma}\smash{\Owidehat{\Sigma}} $ is a Cantorval?
\item
D.~Damanik and A.~Gorodetski \cite{2011SquareFibonacciHamiltonian} proved that 1D Fibonacci Hamiltonians have Cantor spectra $ \Sigma_\lambda $ with Hausdorff dimension strictly between zero and one and with thickness strictly greater than zero, and $ \lim_{\lambda\to 0}\mathrm{dim}_{\mathsf{Haus}}(\Sigma_\lambda)=1 $ and $ \lim_{\lambda\to 0}\tau(\Sigma_\lambda)=+\infty $.
Consequently, they establish a single-interval-characterization for the spectra of separable multidimensional discrete Schrödinger operators generated by 1D Fibonacci Hamiltonians with sufficiently small couplings.
\item
For fixed $ s\in\mathbb{R}\setminus\left\{-1,0,1\right\} $ and for fixed Diophantine frequency in the 2-torus M.~Goldstein, W.~Schlag, and M.~Voda \cite{2019QPSOpLargeCoupling} proved that for sufficiently large couplings the 1D operators with 2-frequency quasiperiodic potential $ V_{\lambda,s,\alpha}:\mathbb{Z}\to\mathbb{R}:n\mapsto \lambda(\cos(2\pi n\alpha_1)+s\cos(2\pi n\alpha_2)) $ must have an interval spectrum.
This complements a result by J.~Bourgain \cite{2002SquareAMOII} establishing that for fixed small coupling the 1D operator with potential $ V_{\lambda,1,\alpha} $ has gaps in its spectrum for some (positive measure) Diophantine frequencies.
\item
The separable operator considered in Theorem~\ref{thm: THEOREM} but with added background potential (so the resulting operator is not necessarily separable) has been studied by J.~Bourgain and I.~Kachkovskiy \cite{2019SquareQPSOp}.
Similar but distinct quasiperiodic (not necessarily separable) operators have been studied by S.~Jitomirskaya, W.~Liu, and Y.~Shi \cite{2020MultiQPSOp}.
The \cite{2019SquareQPSOp,2020MultiQPSOp} results pertain to the spectral type but not the topological structure of the spectrum as a set.
\end{enumerate}
In section~\ref{sec: Preliminaries} we state the preliminaries.
In section~\ref{sec: Proof of thm: Main Theorem(2)} we prove the main theorem.
In section~\ref{sec: Proof of lem: Proof of thm: Main Theorem(2)(3)} we prove a lemma used in the proof of the main theorem.
\section{Preliminaries}\label{sec: Preliminaries}
\noindent%
Let $ A:\mathcal{H}\to\mathcal{H} $ be a bounded operator.
The \textit{spectrum} of $ A $ is denoted $ \sigma(A) $.
Note $ \sigma(A) $ is a nonempty compact subset of $ \mathbb{C} $.
Also note, if $ A $ is self-adjoint, then $ \sigma(A)\subseteq \mathbb{R} $.
\subsection{Potentials and Schrödinger Operators}\label{subsec: Potentials and Schrodinger Operators}
A ($ d $-dimensional lattice) \textit{potential} is a bounded real-valued function $ V $ on $ \mathbb{Z}^d $.
Moreover, $ V $ also denotes the bounded self-adjoint multiplication operator on $ \ell^2(\mathbb{Z}^d)$ defined by $ [V\psi](n)=V(n)\psi(n) $ for every $ \psi\in\ell^2(\mathbb{Z}^d),n\in\mathbb{Z}^d $.
The ($ d $-dimensional lattice) \textit{Schrödinger operator} with respect to a potential $ V:\mathbb{Z}^d\to\mathbb{R} $ is the bounded self-adjoint operator $ H $ on $\ell^2(\mathbb{Z}^d) $ defined by $ H=\Delta+V $, where $ \Delta=\Delta^{(d)} $ is the (\mbox{$ d $-dimensional} lattice) \textit{Laplacian} defined by $[\Delta\psi](n)=\sum_{m\in\left\{e_1,\ldots,e_d\right\}}\psi(n+m)+\psi(n-m) $ for every $ \psi\in\ell^2(\mathbb{Z}^d),n\in\mathbb{Z}^d $; $ \left\{e_1,\ldots,e_d\right\} $ is the standard basis.
\subsection{Separable Potentials and The Laplacian}\label{subsec: Separable Potentials and The Laplacian}
Let $ V:\mathbb{Z}^d\to\mathbb{R} $ be a potential.
Say $ V $ is \textit{separable} if there exist (sub)potentials $ V_1,\ldots,V_d:\mathbb{Z}\to \mathbb{R} $ such that $ V(n)=V_1(n_1)+\cdots+V_d(n_d) $ for every $ n\in\mathbb{Z}^d $.
The proof of the following theorem can be found within \cite{2011SquareFibonacciHamiltonian}.
\begin{thm}\label{thm: Separable Potentials and The Laplacian(2)}
Let $ V:\mathbb{Z}^d\to\mathbb{R}:n\mapsto V_1(n_1)+\cdots+V_d(n_d) $ be a separable potential.
Define $ H\coloneqq \Delta^{(d)}+V $.
For each $ k $, define $ H_k\coloneqq\Delta^{(1)}+V_k $.
Then
\[ \sigma(H)=\sigma(H_1)+\cdots+\sigma(H_d). \]
\end{thm}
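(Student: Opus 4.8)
The plan is to realize $H$, after identifying $\ell^2(\mathbb{Z}^d)$ with a tensor product, as a sum of commuting bounded self-adjoint operators each acting in a single factor, and then to iterate the classical fact that $\sigma(A\otimes I+I\otimes B)=\sigma(A)+\sigma(B)$. First, identify $\ell^2(\mathbb{Z}^d)$ with the $d$-fold Hilbert tensor product $\ell^2(\mathbb{Z})\otimes\cdots\otimes\ell^2(\mathbb{Z})$ through the unitary sending the standard basis vector indexed by $n=(n_1,\dots,n_d)$ to $\delta_{n_1}\otimes\cdots\otimes\delta_{n_d}$. Under this identification one checks on product vectors (and extends by linearity and density) that $\Delta^{(d)}=\sum_{k=1}^{d}I\otimes\cdots\otimes\Delta^{(1)}\otimes\cdots\otimes I$ with $\Delta^{(1)}$ in the $k$-th slot --- this is just the definition of $\Delta^{(d)}$ as the sum over the coordinate directions $m=\pm e_k$ --- and that $V=\sum_{k=1}^{d}I\otimes\cdots\otimes V_k\otimes\cdots\otimes I$, which is exactly the separability hypothesis (multiplication by $V_1(n_1)+\cdots+V_d(n_d)$ is the sum of the multiplications by the $V_k(n_k)$, each acting only in slot $k$). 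Adding, $H=\sum_{k=1}^{d}I\otimes\cdots\otimes H_k\otimes\cdots\otimes I$; every operator here is bounded and self-adjoint, so there are no domain issues.

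By induction on $d$ it then suffices to prove the key two-factor statement: for bounded self-adjoint $A$ on $\mathcal{H}_1$ and $B$ on $\mathcal{H}_2$ one has $\sigma(A\otimes I+I\otimes B)=\sigma(A)+\sigma(B)$. (In the inductive step one takes $A=\sum_{k=1}^{d-1}I\otimes\cdots\otimes H_k\otimes\cdots\otimes I$ on $\ell^2(\mathbb{Z})^{\otimes(d-1)}$, whose spectrum equals $\sigma(H_1)+\cdots+\sigma(H_{d-1})$ by the inductive hypothesis, together with $B=H_d$; all the spectra are compact, so the iterated sumset causes no trouble.) The inclusion $\sigma(A)+\sigma(B)\subseteq\sigma(A\otimes I+I\otimes B)$ is the easy one: given $\lambda\in\sigma(A)$ and $\mu\in\sigma(B)$, choose Weyl sequences, i.e.\ unit vectors $\phi_n\in\mathcal{H}_1$ and $\psi_n\in\mathcal{H}_2$ with $\|(A-\lambda)\phi_n\|\to0$ and $\|(B-\mu)\psi_n\|\to0$ (which exist since $A,B$ are self-adjoint); then the $\phi_n\otimes\psi_n$ are unit vectors and
\[
\left\|\left(A\otimes I+I\otimes B-(\lambda+\mu)\right)(\phi_n\otimes\psi_n)\right\|\le\|(A-\lambda)\phi_n\|+\|(B-\mu)\psi_n\|\longrightarrow0,
\]
so $\lambda+\mu\in\sigma(A\otimes I+I\otimes B)$.

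The reverse inclusion $\sigma(A\otimes I+I\otimes B)\subseteq\sigma(A)+\sigma(B)$ is the step with real content, and I expect it to be the main obstacle. I would argue as follows: $A\otimes I$ and $I\otimes B$ commute, so they lie in a commutative unital C*-algebra $\mathcal{A}$; spectra are unchanged under passage to a C*-subalgebra, and by Gelfand theory the spectrum of any element is the range of its Gelfand transform over the character space $\widehat{\mathcal{A}}$. Since characters are linear and satisfy $\chi(T)\in\sigma(T)$ for every $T\in\mathcal{A}$, we get $\sigma(A\otimes I+I\otimes B)=\{\chi(A\otimes I)+\chi(I\otimes B):\chi\in\widehat{\mathcal{A}}\}\subseteq\sigma(A\otimes I)+\sigma(I\otimes B)$, and finally $\sigma(A\otimes I)=\sigma(A)$ and $\sigma(I\otimes B)=\sigma(B)$ (for $z\notin\sigma(A)$ the operator $(A-z)^{-1}\otimes I$ inverts $A\otimes I-z$, and for $z\in\sigma(A)$ a Weyl sequence $\phi_n$ for $A$ yields the Weyl sequence $\phi_n\otimes\psi$, with $\psi$ any fixed unit vector, for $A\otimes I$). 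Alternatively, bypassing Gelfand theory: if $\nu\notin\sigma(A)+\sigma(B)$, then compactness of $\sigma(A)$ and $\sigma(B)$ gives a positive distance $\delta$ from $\nu$ to $\sigma(A)+\sigma(B)$, the function $(\lambda,\mu)\mapsto(\lambda+\mu-\nu)^{-1}$ is continuous and bounded by $\delta^{-1}$ on $\sigma(A)\times\sigma(B)$, and integrating it against the product of the spectral measures of $A$ and $B$ (the joint spectral measure of the commuting pair $A\otimes I,\ I\otimes B$) yields a bounded two-sided inverse of $A\otimes I+I\otimes B-\nu$. Either way, the substantive ingredient is the joint spectral theorem (equivalently, joint functional calculus) for a commuting pair of self-adjoint operators; once that is in hand the rest --- the tensor-product rewriting of $H$, the treatment of $\Delta^{(d)}$ and the separable $V$, and the induction on $d$ --- is routine.
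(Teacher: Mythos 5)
Your proposal is correct and takes essentially the same route as the paper's argument: identify $\ell^2(\mathbb{Z}^d)$ with the $d$-fold tensor product $\ell^2(\mathbb{Z})\otimes\cdots\otimes\ell^2(\mathbb{Z})$ and rewrite $H$ as $\sum_{k=1}^{d} I\otimes\cdots\otimes H_k\otimes\cdots\otimes I$, a sum of commuting self-adjoint operators each acting in one factor. The only difference is that the paper at this point simply cites the standard fact (Reed--Simon, Theorem VIII.33) that the spectrum of such a sum is $\sigma(H_1)+\cdots+\sigma(H_d)$, whereas you prove it yourself via Weyl sequences for one inclusion and Gelfand theory (or the joint spectral calculus) for the other, which is a sound substitute.
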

\noindent%
By the spectral mapping theorem,
\[\textstyle \sigma(\Delta^{(1)})=\sigma(\ooalign{$ U_{1}$\cr
	$\phantom{U}^{*}$\cr
	$\phantom{U^{*}_{1}}$}+U_1)=\sigma(\Phi_{U_1}(z^*+z))=\{z^*+z:z\in\sigma(U_1)\}=\{z^*+z:|z|=1\}=[-2,2].\]
By Theorem~\ref{thm: Separable Potentials and The Laplacian(2)},
\[\textstyle\sigma(\Delta^{(d)})=\sigma(\Delta^{(1)})+\cdots+\sigma(\Delta^{(1)})\textnormal{ (\(d\) terms)}=[-2d,2d].\]
Here $ U_m $ is the unitary operator from $ \ell^2(\mathbb{Z}^d) $ to $ \ell^2(\mathbb{Z}^d) $ defined by $ [U_m\psi](n)=\psi(n-m) $ for every $ \psi\in \ell^2(\mathbb{Z}^d),n\in\mathbb{Z}^d $.
Also here, $ \Phi_A $ is the Borel functional calculus with respect to a bounded operator $ A $.
\subsection{Quasiperiodic Potentials and The Almost Mathieu Operator}\label{subsec: Quasiperiodic Potentials and The Almost Mathieu Operator}
Let $ V:\mathbb{Z}\to \mathbb{R} $ be a potential.
Let $ b $ be a positive integer.
Say $ V $ is ($ b $-frequency) \textit{quasiperiodic} if there exists $ v\in C(\mathbb{T}^b,\mathbb{R}) $ and there exist $ \alpha,\omega\in\mathbb{T}^b $ such that $ v $ is nonconstant and $ \left\{1,\alpha_1,\ldots,\alpha_b\right\} $ is independent over the rationals and $ V(n)=v(n\alpha+\omega) $ for every $ n\in\mathbb{Z} $.
The proof of the following theorem can be found within \cite{1982SimonReviewPaper}.
\begin{thm}\label{thm: Quasiperiodic Potentials and The Almost Mathieu Operator(2)}
Let $ V_\omega:\mathbb{Z}\to\mathbb{R}:n\mapsto v\big|_{\mathbb{T}^b}(n\alpha+\omega) $ be a quasiperiodic potential with parameter $ \omega $.
Let $ H_\omega $ be the Schrödinger operator.
Then $ \sigma(H_\omega)=\Sigma_{\omega}\eqqcolon \Sigma $ is independent of $ \omega $.
Furthermore, $ \Sigma $ is a nonempty compact subset of $ \mathbb{R} $ and $ \Sigma $ has no isolated points.
\end{thm}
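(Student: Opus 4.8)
The plan is to derive both assertions from the minimality of the translation $T\colon\mathbb{T}^b\to\mathbb{T}^b$, $x\mapsto x+\alpha$. Since $\{1,\alpha_1,\ldots,\alpha_b\}$ is independent over $\mathbb{Q}$, Weyl's theorem (equivalently Kronecker's theorem) gives that every orbit $\{\omega+n\alpha:n\in\mathbb{Z}\}$ is dense in $\mathbb{T}^b$. The only other ingredient I would need is that $\omega\mapsto H_\omega$ is continuous in operator norm: since $H_\omega-H_{\omega'}$ is multiplication by $n\mapsto v(n\alpha+\omega)-v(n\alpha+\omega')$, one has $\lVert H_\omega-H_{\omega'}\rVert=\sup_{n\in\mathbb{Z}}|v(n\alpha+\omega)-v(n\alpha+\omega')|$, which tends to $0$ as $\omega'\to\omega$ by uniform continuity of $v$ on the compact torus $\mathbb{T}^b$. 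That $\Sigma$ is a nonempty compact subset of $\mathbb{R}$ is immediate from boundedness and self-adjointness, as already recorded above, so the real content is the $\omega$-independence and the absence of isolated points.

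For the $\omega$-independence I would use covariance. Let $S\colon\ell^2(\mathbb{Z})\to\ell^2(\mathbb{Z})$, $(S\psi)(n)=\psi(n-1)$, be the shift; a direct computation gives $S^{-1}H_\omega S=H_{\omega+\alpha}$, hence $\sigma(H_\omega)=\sigma(H_{\omega+\alpha})$ and, iterating, $\sigma(H_\omega)=\sigma(H_{\omega+n\alpha})$ for every $n\in\mathbb{Z}$. Now fix $\omega,\omega'\in\mathbb{T}^b$ and choose $n_k$ with $\omega+n_k\alpha\to\omega'$, possible by minimality. Norm-continuity of $\omega\mapsto H_\omega$ together with the estimate $\mathrm{dist}_{\mathsf{Haus}}(\sigma(A),\sigma(B))\le\lVert A-B\rVert$ for bounded self-adjoint $A,B$ yields $\sigma(H_{\omega+n_k\alpha})\to\sigma(H_{\omega'})$ in the Hausdorff metric; since the left-hand side equals $\sigma(H_\omega)$ for every $k$, we get $\sigma(H_{\omega'})=\sigma(H_\omega)$. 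Thus $\sigma(H_\omega)=\Sigma$ is independent of $\omega$.

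For the absence of isolated points — the step I expect to be the crux — suppose toward a contradiction that $E\in\Sigma$ is isolated. Then for every $\omega$ the spectral projection $P_\omega:=\mathbf 1_{\{E\}}(H_\omega)$ is nonzero, so $E$ is an eigenvalue of $H_\omega$; and since any solution of the recurrence $\psi(n+1)+\psi(n-1)+(V_\omega(n)-E)\psi(n)=0$ is determined by two consecutive values, $\dim\ker(H_\omega-E)\le 2$, so $\mathrm{tr}\,P_\omega\in\{1,2\}$. Using the $\omega$-independence just proved, fix a small circle $\gamma\subset\mathbb{C}$ around $E$ avoiding $\Sigma$; then $P_\omega=-\tfrac{1}{2\pi i}\oint_\gamma(H_\omega-z)^{-1}\,dz$ depends norm-continuously on $\omega$, so $f(\omega):=\langle\delta_0,P_\omega\delta_0\rangle$ is continuous and nonnegative. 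The covariance $S^{-1}P_\omega S=P_{\omega+\alpha}$ gives $\langle\delta_n,P_\omega\delta_n\rangle=f(\omega+n\alpha)$ for every $n$, whence
\[\textstyle\sum_{n\in\mathbb{Z}}f(\omega+n\alpha)=\mathrm{tr}\,P_\omega\le 2.\]
If $f$ were not identically zero, continuity would give $f>\delta>0$ on a neighborhood $U$ of some point, and minimality would force $\omega+n\alpha\in U$ for infinitely many $n$, making the series diverge — a contradiction. Hence $f\equiv 0$, so $\langle\delta_n,P_\omega\delta_n\rangle=0$ for all $n$ and $\omega$, forcing $P_\omega=0$ and contradicting $P_\omega\ne 0$. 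Therefore $\Sigma$ has no isolated points. The main obstacle is assembling this last argument cleanly — in particular establishing the covariance relation for the projection $P_\omega$ and exploiting minimality (rather than mere ergodicity) so that the conclusion holds for the spectrum as a set and for \emph{every} $\omega$, not merely almost surely.
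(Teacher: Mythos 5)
Your proposal is correct, and it is a complete, self-contained proof of a statement for which the paper itself offers no argument, deferring instead to Simon's review \cite{1982SimonReviewPaper}. Your route is essentially the standard one from that literature: phase-independence via the covariance $S^{-1}H_\omega S=H_{\omega+\alpha}$, minimality of the translation by $\alpha$ (rational independence of $\{1,\alpha_1,\ldots,\alpha_b\}$), norm-continuity of $\omega\mapsto H_\omega$ from uniform continuity of $v$, and the bound $\mathrm{dist}_{\mathsf{Haus}}(\sigma(A),\sigma(B))\le\lVert A-B\rVert$ (the same estimate the paper proves in Lemma~\ref{lem: Proof of thm: Main Theorem(2)(2)}). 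For the absence of isolated points you replace the ergodic-theoretic ``almost every $\omega$'' argument by a trace argument driven by minimality and norm-continuity of the Riesz projection $P_\omega$, which is exactly what is needed to get the conclusion for \emph{every} phase rather than almost surely; the chain isolated point $\Rightarrow$ eigenvalue with $\dim\ker(H_\omega-E)\le 2$ $\Rightarrow$ $\operatorname{tr}P_\omega\le 2$, together with $\langle\delta_n,P_\omega\delta_n\rangle=f(\omega+n\alpha)$ and the fact that a dense orbit meets any open set in infinitely many distinct points, is sound (note the order of steps matters and you got it right: the $\omega$-independence is proved first, so the contour $\gamma$ can be chosen uniformly in $\omega$). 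The only ingredients you use beyond the statement itself are standard facts (Weyl/Kronecker density, Riesz projections for isolated spectral points, positivity of $f$ from $\langle\delta_0,P_\omega\delta_0\rangle=\lVert P_\omega\delta_0\rVert^2$), so there is no gap; your write-up arguably buys more than the citation does, since it makes explicit why minimality, not mere ergodicity, yields the set-theoretic statement for all $\omega$.
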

\noindent%
The potential of the almost Mathieu operator $ V_{\lambda,\alpha,\omega}:\mathbb{Z}\to\mathbb{R}:n\mapsto2\lambda\cos(2\pi(n\alpha+\omega)) $ is $ 1 $-frequency quasiperiodic when $ \alpha $ is irrational.
By the resolution of the Ten Martini Problem within \cite{2009TheTenMartiniProblem}, the spectrum $ \Sigma_{\lambda,\alpha,\omega} $ of the almost Mathieu operator is a Cantor set when $ \alpha $ is irrational.
\subsection{Integrated Density of States}\label{subsec: Integrated Density of States}
Let $ V_{\omega}:\mathbb{Z}\to\mathbb{R}:n\mapsto v\big|_{\mathbb{T}^b}(n\alpha+\omega) $ be a quasiperiodic potential with parameter $ \omega $.
Let $ H_{\omega} $ be the Schrödinger operator.
The \textit{integrated density of states} (IDS) is
\begin{align*}
\mathfrak{N}:\mathbb{R}\to [0,1]:x
&\mapsto \textstyle\int\langle\delta_0,\mathbbm{1}_{(-\infty,x]}(H_{\theta})\delta_0\rangle d\mu(\theta).
\end{align*}
Here $ \mu $ is the normalized Haar measure on $ \mathbb{T}^b $.
Note $ \mathfrak{N}\upharpoonright(-\infty,\inf \Sigma]\equiv 0 $ and $ \mathfrak{N}\upharpoonright[\sup \Sigma,+\infty)\equiv1 $.
The proof of Theorem~\ref{thm: Integrated Density of States(2)} can be found within \cite{1982SimonReviewPaper} and of Theorem~\ref{thm: Integrated Density of States(3)} can be found within \cite{2019HolderContinuity}.
\begin{thm}[Gap Labeling]\label{thm: Integrated Density of States(2)}
Let $ V:\mathbb{Z}\to\mathbb{R}:n\mapsto v\big|_{\mathbb{T}^b}(n\alpha+\omega) $ be a quasiperiodic potential.
Let $ H $ be the Schrödinger operator.
Let $ \mathfrak{N} $ be the IDS.
Define $ \Sigma\coloneqq\sigma(H) $.
Let $ \mathrm{Gap}_{\mathsf{b}}(\Sigma) $ be the collection of all bounded gaps of $ \Sigma $.
\begin{enumerate}[(i)]
\item
$ \mathfrak{N} $ is monotone and continuous.
\item
$ \left\{x\in\mathbb{R}:\left(\forall \varepsilon>0\right)[\mathfrak{N}(x+\varepsilon)-\mathfrak{N}(x-\varepsilon)>0]\right\}=\Sigma $.
\item
$ \left\{\mathfrak{N}(x):x\in U\in \mathrm{Gap}_{\mathsf{b}}(\Sigma)\right\}\subseteq \{\mathbf{n}\alpha-\left\lfloor\mathbf{n}\alpha\right\rfloor:\mathbf{n}\in\mathbb{Z}^b\}\setminus\left\{0\right\} $.
\item
$ \left\{\mathfrak{N}(x):x\in U\in \mathrm{Gap}_{\mathsf{b}}(\Sigma)\right\}\supseteq \{\mathbf{n}\alpha-\left\lfloor\mathbf{n}\alpha\right\rfloor:\mathbf{n}\in\mathbb{Z}^b\}\setminus\left\{0\right\} $ $ \implies $ $ \Sigma $ is a Cantor set.
\end{enumerate}
\end{thm}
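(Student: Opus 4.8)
\noindent The plan is to arrange the four assertions by increasing depth. Parts~(i) and~(ii) are soft facts about the ergodic family $\{H_\theta\}_{\theta\in\mathbb{T}^b}$ over the minimal rotation $T\colon\phi\mapsto\phi+\alpha$ (minimal, hence ergodic for Haar measure $\mu$, because $\{1,\alpha_1,\ldots,\alpha_b\}$ is rationally independent), and they rest on the $\theta$-independence of $\Sigma=\sigma(H_\theta)$ from Theorem~\ref{thm: Quasiperiodic Potentials and The Almost Mathieu Operator(2)}. Part~(iii) is the genuine gap-labeling input and the step I expect to be the main obstacle. Part~(iv) then follows formally from~(i)--(iii) together with the density in $[0,1]$ of the frequency module $\{\mathbf{n}\alpha-\lfloor\mathbf{n}\alpha\rfloor:\mathbf{n}\in\mathbb{Z}^b\}$.

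For~(i): monotonicity is immediate because $x\mapsto\langle\delta_0,\mathbbm{1}_{(-\infty,x]}(H_\theta)\delta_0\rangle$ is the distribution function of the spectral measure of $H_\theta$ at $\delta_0$, and integrating over $\theta$ preserves this. For continuity I would show the density of states measure $d\mathfrak{N}$ has no atoms: a jump at $E$ would force $f(\theta)\coloneqq\langle\delta_0,\mathbbm{1}_{\{E\}}(H_\theta)\delta_0\rangle$ to have positive $\mu$-average, whereas covariance gives $\sum_{n\in\mathbb{Z}}f(\theta+n\alpha)=\operatorname{tr}\mathbbm{1}_{\{E\}}(H_\theta)\le 2$ (a second-order difference operator has eigenspaces of dimension at most $2$), and this finiteness is incompatible with $\int f\,d\mu>0$ by the ergodic theorem. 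For~(ii), the inclusion ``$\subseteq$'' is clear: if $x\notin\Sigma$ one picks a small interval $I\ni x$ with $I\cap\Sigma=\varnothing$, so $\mathbbm{1}_I(H_\theta)=0$ for every $\theta$ and $\mathfrak{N}$ is constant across $x$. For ``$\supseteq$'' I argue contrapositively: if $\mathfrak{N}(x+\varepsilon)=\mathfrak{N}(x-\varepsilon)$ for some $\varepsilon>0$, then with $I=(x-\varepsilon,x+\varepsilon]$ one has $\langle\delta_0,\mathbbm{1}_I(H_\theta)\delta_0\rangle=0$ for $\mu$-a.e.\ $\theta$, and invoking covariance over the countably many $n\in\mathbb{Z}$ kills all diagonal entries of the positive operator $\mathbbm{1}_I(H_\theta)$ for $\mu$-a.e.\ $\theta$, so $\mathbbm{1}_I(H_\theta)=0$ and $\sigma(H_\theta)=\Sigma$ avoids $(x-\varepsilon,x+\varepsilon)$; hence $x\notin\Sigma$.

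Part~(iii) is where the real work lies. I would pass to the transfer-matrix cocycle: the difference equation $H_\theta u=Eu$ becomes a continuous $\mathrm{SL}(2,\mathbb{R})$-cocycle $A_E$ over $T$, and by the Johnson--Moser theory (see \cite{1982SimonReviewPaper}) it carries a fibered rotation number $\rho(E)$ that is continuous in $E$ and of which $\mathfrak{N}$ is an affine function; in particular $\rho$, like $\mathfrak{N}$, is constant on each bounded gap $U$ of $\Sigma$. For $E\in U$ the cocycle $(T,A_E)$ is uniformly hyperbolic (Johnson's dichotomy criterion: $E\notin\Sigma$ if and only if $(T,A_E)$ admits an exponential dichotomy), hence carries a continuous $T$-invariant stable line field, i.e.\ a continuous map $s\colon\mathbb{T}^b\to\mathbb{R}\mathrm{P}^1$, whose homotopy class lies in $[\mathbb{T}^b,\mathbb{R}\mathrm{P}^1]\cong H^1(\mathbb{T}^b;\mathbb{Z})\cong\mathbb{Z}^b$; call it $\mathbf{n}$. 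Since every orbit of the projectivized cocycle is attracted to this line field, the winding that defines $\rho(E)$ is controlled by the increment of $s$ under translation by $\alpha$, and one reads off that $\rho(E)$, hence $\mathfrak{N}(E)$, lies in the subgroup $\mathbb{Z}+\mathbb{Z}\alpha_1+\cdots+\mathbb{Z}\alpha_b$ of $\mathbb{R}$ reduced modulo $1$, i.e.\ $\mathfrak{N}(E)=\mathbf{n}\alpha-\lfloor\mathbf{n}\alpha\rfloor$. This value is nonzero: a bounded gap lies strictly above $\inf\Sigma$ (otherwise $\inf\Sigma$ would be isolated in $\Sigma$, contradicting Theorem~\ref{thm: Quasiperiodic Potentials and The Almost Mathieu Operator(2)}), while $\mathfrak{N}\equiv0$ on $(-\infty,\inf\Sigma]$ and $\mathfrak{N}$ increases strictly across $\inf\Sigma$ by~(ii), so $\mathfrak{N}>0$ on $U$. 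An equivalent operator-algebraic route would be: $\mathbbm{1}_{(-\infty,E]}(H)$ is a genuine projection in the crossed product $C(\mathbb{T}^b)\rtimes_\alpha\mathbb{Z}$ whenever $E\notin\Sigma$, $\mathfrak{N}(E)$ is its canonical trace, and the Pimsner--Voiculescu computation of the range of that trace on $K_0$ (using that the rotation acts trivially on $K$-theory) yields exactly $\mathbb{Z}+\sum_i\mathbb{Z}\alpha_i$. I expect the friction here to be in quoting the hyperbolicity/rotation-number package cleanly and in making the ``winding equals increment of $s$'' identification precise when $b>1$.

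Finally~(iv) is a formal consequence. By Theorem~\ref{thm: Quasiperiodic Potentials and The Almost Mathieu Operator(2)}, $\Sigma$ is nonempty, compact, and perfect, so it is a Cantor set as soon as it has empty interior. Suppose $[a,b]\subseteq\Sigma$ with $a<b$. Then $[a,b]$ contains no bounded gap of $\Sigma$, and since every point of $[a,b]$ lies in $\Sigma$, part~(ii) gives $\mathfrak{N}(a)<\mathfrak{N}(b)$. Rational independence of $\{1,\alpha_1,\ldots,\alpha_b\}$ makes the frequency module dense in $[0,1]$, so it contains some $r$ with $\mathfrak{N}(a)<r<\mathfrak{N}(b)$; by the hypothesis of~(iv) there is a bounded gap $U$ and a point $x\in U$ with $\mathfrak{N}(x)=r$. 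Monotonicity of $\mathfrak{N}$ then forces $a<x<b$, so $x\in[a,b]\subseteq\Sigma$, contradicting $x\in U\subseteq\mathbb{R}\setminus\Sigma$. Hence $\Sigma$ has empty interior and is a Cantor set.
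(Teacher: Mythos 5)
Your proposal is sound and, in substance, follows the same route as the source the paper defers to: the paper gives no proof of this theorem but cites Simon's 1982 review, where (i)--(ii) are obtained exactly by your covariance-plus-ergodicity arguments (the Delyon--Souillard trace bound for continuity, the support characterization for the growth points) and (iii) is the Johnson--Moser rotation-number/exponential-dichotomy gap labeling (equivalently Bellissard's $K$-theoretic trace computation), with (iv) the same density-of-the-frequency-module argument you give. So apart from the admitted black-boxing of the rotation-number package in (iii) -- which is also how the paper treats it, by citation -- there is nothing to correct.
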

\begin{thm}[Hölder Continuity]\label{thm: Integrated Density of States(3)}
Let $ V_{\lambda,v,\alpha,\omega}:\mathbb{Z}\to\mathbb{R}:n\mapsto \lambda v\big|_{\mathbb{T}^b}(n\alpha+\omega) $ be a quasiperiodic potential with parameters $ \lambda,v,\alpha,\omega $.
Let $ H_{\lambda,v,\alpha,\omega} $ be the Schrödinger operator.
Fix $ v,\alpha $.
Let $ \mathfrak{N}_{\lambda} $ be the IDS.
Assume the following.
\begin{enumerate}[(i)]
\item
$ \alpha $ satisfies a Diophantine condition:
\[\textstyle \smash{\alpha\in\bigcap_{\flsubstack{p\in\mathbb{Z}\\\smash{\mathbf{q}\in\mathbb{Z}^b\setminus\left\{\mathbf{0}\right\}}}}\left\{\mathbf{x}\in\mathbb{R}^b:|\mathbf{q}\mathbf{x}-2\pi p|\ge \frac{c}{\lVert\mathbf{q}\rVert^{t-1}}\right\}\mathrlap{{}\eqqcolon \vphantom{\mathsf{DC}}\smash{\Owidetilde{\mathsf{DC}}}{}^{b}(c,t)}} \]
for some $ c>0,t>b $.
\item
$ v $ satisfies a regularity condition: $ v $ is $ r $-differentiable on $ \mathbb{T}^b $ for some $ r\ge 550t $.
\end{enumerate}
There exist $ \lambda_0=\lambda_0(v,b,c,t,r)>0, C_{\mathsf{H}}=C_{\mathsf{H}}(b,c,t)>0 $ such that
\[
\smash{|\mathfrak{N}_\lambda(x)-\mathfrak{N}_\lambda(y)|\le C_{\mathsf{H}}|x-y|^{{1}/{2}}}
\]
for every $ \lambda_0\ge |\lambda|>0,x,y $.
\end{thm}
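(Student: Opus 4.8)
This is a known regularity statement for $b$-frequency quasiperiodic Schr\"odinger operators in the perturbative regime, and the natural route is to pass to the fibered rotation number. For the Schr\"odinger cocycle $(\alpha,A^{E}_{\lambda v})$ over the translation of $\mathbb T^{b}$, with
\[
A^{E}_{\lambda v}(\theta)=\begin{pmatrix} E-\lambda v(\theta) & -1\\ 1 & 0\end{pmatrix}\in SL(2,\mathbb R),
\]
the fibered rotation number $\rho_\lambda(E)$ differs from the integrated density of states by an affine transformation, $\mathfrak N_\lambda(E)=1-2\rho_\lambda(E)$ in the usual normalization $\rho_\lambda(E)\in[0,\tfrac12]$ (Johnson--Moser). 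So it suffices to prove that $E\mapsto\rho_\lambda(E)$ is $\tfrac12$-H\"older with a constant depending only on $b,c,t$, uniformly over $0<\lvert\lambda\rvert<\lambda_0$ for a threshold $\lambda_0=\lambda_0(v,b,c,t,r)$; outside a fixed compact $E$-interval $\rho_\lambda$ is constant, so only that interval matters.

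I would split the interval into three regions. On each bounded gap of $\Sigma_\lambda$ the rotation number is locally constant (gap labeling, Theorem~\ref{thm: Integrated Density of States(2)}), so the estimate is trivial there. On the part of the spectrum inside a fixed compact subinterval of $(-2,2)$---a definite distance from the band edges $\pm2$---the unperturbed cocycle $A^{E}_{0}$ is uniformly elliptic, conjugate to a constant rotation by an angle bounded away from $0$ and $\pi$, so for small $\lvert\lambda\rvert$ the cocycle $(\alpha,A^{E}_{\lambda v})$ is a small analytic perturbation of a constant elliptic cocycle with Diophantine base; Eliasson's reducibility theory---or, more robustly, analytic almost-reducibility in the subcritical regime---together with the $\tfrac12$-H\"older continuity of the rotation number for quasiperiodic $SL(2,\mathbb R)$ cocycles near reducible ones under a Diophantine condition gives $\lvert\rho_\lambda(E_1)-\rho_\lambda(E_2)\rvert\le C\lVert A^{E_1}_{\lambda v}-A^{E_2}_{\lambda v}\rVert^{1/2}\le C'\lvert E_1-E_2\rvert^{1/2}$ there. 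The delicate region is the band-edge zone $\lvert E\rvert\in[2-\delta,\,2+\delta+O(\lambda)]$: already at $\lambda=0$ one has $\mathfrak N_0(E)=\tfrac12+\tfrac1\pi\arcsin(E/2)$, which has a square-root (van Hove) singularity at $E=\pm2$, so the exponent $\tfrac12$ is genuinely forced; there one must establish the one-sided bound $\mathfrak N_\lambda(E)-\mathfrak N_\lambda(\inf\Sigma_\lambda)\le C\,(E-\inf\Sigma_\lambda)^{1/2}$ near $\inf\Sigma_\lambda$ (and symmetrically near $\sup\Sigma_\lambda$) directly---e.g.\ through the effective-mass description of $\Delta+\lambda V$ at the extrema of its symbol, or a continuity-of-rotation-number estimate arranged to survive the parabolic degeneration of $A^{E}_0$ at $E=\pm2$. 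Patching the three regions, with the interior constant controlled uniformly in $\delta$ against the edge analysis, gives the global $\tfrac12$-H\"older bound.

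Uniformity in $\lambda$ is then bookkeeping: $\lambda$ enters every estimate only through (a) the smallness threshold $\lambda_0$---which is where the hypothesis $r\ge550t$ is used, to run the KAM / almost-reducibility iteration against the derivative loss governed by the Diophantine exponent $t$---and (b) harmless $O(\lvert\lambda\rvert)$ error terms, so the final constant $C_{\mathsf H}$ depends only on $b,c,t$. A different route, also giving uniform-in-$\lambda$ constants and presumably the one used in \cite{2019HolderContinuity}, is the subharmonic-function method: express $\mathfrak N_\lambda(E)$ as the boundary value of the harmonic conjugate of the complexified Lyapunov exponent $L_\lambda(E+i\eta)$ via the Thouless formula, and extract the modulus of continuity from a large-deviation theorem and the avalanche principle for the analytic cocycle.

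I expect the main obstacle to be exactly the band-edge region together with the uniformity of the estimates as $E\to\inf\Sigma_\lambda$ and $E\to\sup\Sigma_\lambda$: that is where the exponent $\tfrac12$ is sharp, where the ``small perturbation of a constant elliptic cocycle'' picture degenerates, and where the constants coming from the interior analysis must be shown not to blow up. The gap and interior regions, by contrast, are standard consequences of Diophantine (almost-)reducibility and gap labeling.
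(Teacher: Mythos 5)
You should first note that the paper does not prove Theorem~\ref{thm: Integrated Density of States(3)} at all: it is imported as a black box, with the proof attributed to Cai--Chavaudret--You--Zhou \cite{2019HolderContinuity}. Measured against that published proof, your general strategy --- pass to the fibered rotation number and exploit perturbative (almost-)reducibility of the Schr\"odinger cocycle over a Diophantine translation --- is the right family of ideas, but as written the sketch has gaps that are not mere bookkeeping.

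Concretely: (1) your interior analysis invokes Eliasson reducibility and ``analytic almost-reducibility'', yet hypothesis (ii) only gives $v\in C^{r}(\mathbb{T}^{b})$ with $r\ge 550t$; the cocycle is not analytic. Running quantitative almost reducibility in the finitely differentiable category --- approximating the $C^{r}$ cocycle by analytic ones and balancing the loss of derivatives against the Diophantine exponent $t$, which is precisely where the threshold $r\ge 550t$ and the dependence $\lambda_0=\lambda_0(v,b,c,t,r)$ originate --- is the actual content of \cite{2019HolderContinuity}, not a final uniformity check, and your sketch assumes it rather than supplies it. (2) The band-edge zone, which you yourself flag as the main obstacle and where the exponent $1/2$ is sharp, is left entirely open; in the cited approach it is not treated by a separate effective-mass argument but falls out of the same quantitative almost-reducibility estimates uniformly in the energy, so without it your three-region patching does not close and the claimed uniformity $C_{\mathsf H}=C_{\mathsf H}(b,c,t)$ is unproved exactly where it is delicate. (3) The alternative route you suggest as ``presumably the one used'' --- Thouless formula plus large deviations plus the avalanche principle --- is not the method of \cite{2019HolderContinuity} and cannot be run here at all: those arguments rest on subharmonicity coming from analyticity of $v$ and belong to the positive-Lyapunov-exponent regime, whereas the present statement concerns finitely differentiable $v$ at small coupling. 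In short, the outline points in a sensible direction, but the two load-bearing ingredients (finitely differentiable almost reducibility and a uniform treatment of all energies including the band edges) are exactly the parts left unproved, which is why the paper simply cites \cite{2019HolderContinuity} for this theorem.
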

\subsection{Spectral Gaps}\label{subsec: Spectral Gaps}
Let $ V:\mathbb{Z}\to\mathbb{R}:n\mapsto v\big|_{\mathbb{T}^b}(n\alpha+\omega) $ be a quasiperiodic potential.
Let $ H $ be the Schrödinger operator.
Let $ \mathfrak{N} $ be the IDS.
Define $ \Sigma\coloneqq\sigma(H) $.
Let $ \mathrm{Gap}_{\mathsf{b}}(\Sigma) $ be the collection of all bounded gaps of $ \Sigma $.
Fix $ \mathbf{n}\in\mathbb{Z}^b\setminus\left\{\mathbf{0}\right\} $.
The $ \mathbf{n} $-th \textit{spectral gap} of $ \Sigma $ is either the element $ G^{(\mathbf{n})} $ in $ \mathrm{Gap}_{\mathsf{b}}(\Sigma) $ such that $ \mathfrak{N}\upharpoonright \smash{\overline{G^{(\mathbf{n})}}}\equiv \mathbf{n}\alpha-\lfloor\mathbf{n} \alpha\rfloor $ or the empty set when such an element $ G^{(\mathbf{n})} $ does not exist.
Moreover, $ (-\infty,\inf \Sigma)\cup(\sup \Sigma,+\infty) $ is called the $ \mathbf{0} $-th \textit{spectral gap} of $ \Sigma $.
The proof of the following theorem can be found within \cite{2017preprintGapEstimate}.
\begin{thm}[Gap Estimate]\label{thm: Spectral Gaps(7)}
Let $ V_{v,\alpha,\omega}:\mathbb{Z}\to\mathbb{R}:n\mapsto v\big|_{\mathbb{T}^b}(n\alpha+\omega) $ be a quasiperiodic potential with parameters $ v,\alpha,\omega $.
Let $ H_{v,\alpha,\omega} $ be the Schrödinger operator.
Fix $ \alpha $.
Fix $ r_0>r>0 $.
Define $ \Sigma_{v}\coloneqq\sigma(H_{v,\alpha,\omega}) $.
For each $ \mathbf{n}\ne \mathbf{0} $, let $ \ooalign{$ G_{v}$\cr
	$\phantom{G}^{(\mathbf{n})}$\cr
	$\phantom{G^{(\mathbf{n})}_{v}}$}=(\ooalign{$ E_{v}$\cr
	$\phantom{E}^{(\mathbf{n}){\scriptscriptstyle -}}$\cr
	$\phantom{E^{(\mathbf{n}){\scriptscriptstyle -}}_{v}}$},\ooalign{$ E_{v}$\cr
	$\phantom{E}^{(\mathbf{n}){\scriptscriptstyle +}}$\cr
	$\phantom{E^{(\mathbf{n}){\scriptscriptstyle +}}_{v}}$}) $ be the $ \mathbf{n} $-th spectral gap of $ \Sigma_{v} $.
Assume the following.
\begin{enumerate}[(i)]
\item
$ \alpha $ satisfies a Diophantine condition:
\[\textstyle \smash{\alpha\in\bigcap_{\flsubstack{p\in\mathbb{Z}\\\smash{\mathbf{q}\in\mathbb{Z}^b\setminus\left\{\mathbf{0}\right\}}}}\left\{\mathbf{x}\in\mathbb{R}^b:|\mathbf{q}\mathbf{x}-p|\ge \frac{c}{\lVert\mathbf{q}\rVert^{t-1}}\right\}\mathrlap{{}\eqqcolon \mathsf{DC}{}^{b}(c,t)}} \]
for some $ c>0,t>b $.
\item
$ v $ satisfies a regularity condition: $ v $ is analytic on $ (\mathbb{T}+i(-r_0,r_0))^b $.
\end{enumerate}
There exists $ \varepsilon=\varepsilon(b,c,t,r_0,r)>0 $ such that
\[
\textstyle\smash{|\ooalign{$ E_{v}$\cr
	$\phantom{E}^{(\mathbf{n}){\scriptscriptstyle +}}$\cr
	$\phantom{E^{(\mathbf{n}){\scriptscriptstyle +}}_{v}}$}-\ooalign{$ E_{v}$\cr
	$\phantom{E}^{(\mathbf{n}){\scriptscriptstyle -}}$\cr
	$\phantom{E^{(\mathbf{n}){\scriptscriptstyle -}}_{v}}$}|\le (|v|_{r_0})^{2/3}e^{-2\pi r\lVert\mathbf{n}\rVert}}
\]
for every $ \varepsilon\ge |v|_{r_0}>0,\mathbf{n}\ne \mathbf{0} $.
\end{thm}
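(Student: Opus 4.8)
The plan is to translate the statement into the language of $\mathrm{SL}(2,\mathbb{R})$ cocycles and then to exploit the reducibility theory available for small analytic potentials over Diophantine rotations.

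First I would pass to the Schrödinger cocycle $(\alpha,A_E)$, where $A_E(\theta)$ is the transfer matrix $\begin{pmatrix} E-v(\theta) & -1 \\ 1 & 0\end{pmatrix}$. By Johnson's theorem together with the gap-labeling Theorem~\ref{thm: Integrated Density of States(2)}, an energy $E$ lies in a bounded gap of $\Sigma_v$ precisely when $(\alpha,A_E)$ is uniformly hyperbolic, and the two edges $E_v^{(\mathbf{n})\pm}$ of the $\mathbf{n}$-th gap are the endpoints of the maximal open $E$-interval of uniform hyperbolicity along which the fibered rotation number is locked at the value prescribed for the index $\mathbf{n}$ by Theorem~\ref{thm: Integrated Density of States(2)}. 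Thus the quantity to estimate is the $E$-length of one plateau of the rotation number.

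Next I would record the two structural facts that tie the estimate to the single index $\mathbf{n}$. Analyticity of $v$ on the strip of width $r_0$ gives the Cauchy bound $|\widehat{v}(\mathbf{m})|\le |v|_{r_0}e^{-2\pi r_0\|\mathbf{m}\|}$, so the Fourier mass of the potential at frequency $\mathbf{m}\cdot\alpha$ is exponentially small in $\|\mathbf{m}\|$; and since $\{1,\alpha_1,\dots,\alpha_b\}$ is rationally independent, the residue $\mathbf{m}\cdot\alpha\bmod 1$ determines $\mathbf{m}$, so among all Fourier modes exactly $\pm\mathbf{n}$ are in resonance with the $\mathbf{n}$-th gap. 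Heuristically this already suggests gap length $\approx 2|\widehat{v}(\mathbf{n})|\lesssim |v|_{r_0}e^{-2\pi r_0\|\mathbf{n}\|}$; the real work is to make this rigorous in the presence of small denominators and higher-order resonances, and this is also where the weaker exponent and rate in the claimed bound come from.

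The rigorous route is an Eliasson-type KAM (almost-reducibility) iteration applied to $(\alpha,A_E)$ for $E$ near the $\mathbf{n}$-th gap. Since $|v|_{r_0}$ is below a threshold $\varepsilon(b,c,t,r_0,r)$ and $\alpha\in\mathsf{DC}^{b}(c,t)$, at each step one conjugates away the non-resonant Fourier modes using the Diophantine bounds, leaving near the resonance $\mathbf{n}$ a cocycle close to a constant whose rotation number sits near the value labeling the $\mathbf{n}$-th gap; the $E$-range over which that constant remains hyperbolic, that is, over which the gap stays open, is comparable to the size of the surviving resonant perturbation. One then iterates, allotting at step $j$ a small slice of analyticity width so the total loss telescopes to $r_0-r$, while tracking the resonant perturbation, which is bounded by $|\widehat{v}(\mathbf{n})|$ amplified by the accumulated KAM errors. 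The coarse bookkeeping that absorbs those errors is exactly what degrades the heuristic bound: the prefactor weakens from $|v|_{r_0}$ to $(|v|_{r_0})^{2/3}$ and the exponential rate from $r_0$ to $r$. I expect the main obstacle to be precisely this resonance bookkeeping---showing that every resonance other than $\pm\mathbf{n}$ can be removed without reopening or enlarging the $\mathbf{n}$-th gap, and that the compounded small-denominator and analyticity losses sum uniformly in $\mathbf{n}$ so that they cost only a fixed power of $|v|_{r_0}$ and a fixed fraction of the strip width. The complete details are in \cite{2017preprintGapEstimate}.
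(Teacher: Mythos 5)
The paper offers no proof of this theorem: it is quoted directly from \cite{2017preprintGapEstimate}, which is exactly where your proposal also ultimately places the burden, so your treatment matches the paper's. Your sketch of the almost-reducibility strategy (rotation-number plateaus at the gap in question, removal of non-resonant Fourier modes under the Diophantine condition, with the accumulated KAM and analyticity losses accounting for the exponent $2/3$ and the reduced rate $r<r_0$) is a fair description of the cited argument, but it remains a heuristic outline deferring all quantitative work to the citation---which is no more and no less than what the paper itself does.
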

\subsection{Thickness and Gap Lemmas}\label{subsec: Thickness and Gap Lemmas}
Let $ K $ be a nonempty compact subset of $ \mathbb{R} $.
Henceforth $ K^{\scriptscriptstyle -}\coloneqq \inf K $ and $ K^{\scriptscriptstyle +}\coloneqq \sup K $.
Let $ \mathrm{Gap}_{\mathsf{b}}(K) $ be the collection of all bounded gaps of $ K $.
Let $ U $ be a bounded gap of $ K $.
The \textit{left-plank} of $ U $, denoted $ \pi_{\scriptscriptstyle -}(K,U) $, is the length-maximal interval $ [a,b] $ contained in $ [K^{\scriptscriptstyle -},K^{\scriptscriptstyle +}] $ \mbox{such that} $ b\in\partial U $ and for each $ V\in\mathrm{Gap}_{\mathsf{b}}(K) $, if $ V\cap [a,b]\ne\varnothing $, then $ \mathrm{length}(V)< \mathrm{length}(U) $.
The \textit{right-plank} of $ U $, denoted $ \pi_{\scriptscriptstyle +}(K,U) $, is the length-maximal interval $ [a,b] $ contained in $ [K^{\scriptscriptstyle -},K^{\scriptscriptstyle +}] $ such that $ a\in\partial U $ and for each $ V\in\mathrm{Gap}_{\mathsf{b}}(K) $, if $ V\cap [a,b]\ne\varnothing $, then $ \mathrm{length}(V)< \mathrm{length}(U) $.
The (local) \textit{thickness} of $ U $ is
\[
\inf\nolimits_{{\bullet}\in\left\{-,+\right\}}\frac{\mathrm{length}(\pi_{\scriptscriptstyle \bullet}(K,U))}{\mathrm{length}(U)}\mathrlap{{}\eqqcolon \tau(K,U).}
\]
The \textit{thickness} of $ K $ is
\[\smash{\inf\nolimits_{U\in\mathrm{Gap}_{\mathsf{b}}(K)}\tau(K,U)\mathrlap{{}\eqqcolon \tau(K).}}\]
Note $ \tau(K)=0 $ when $ K $ has an isolated point, $ \tau(K)=1 $ when $ K $ is the middle-thirds Cantor set, and $ \tau(K)=+\infty $ when $ K $ is an interval.
The proof of Theorem~\ref{thm: Thickness and Gap Lemmas(7)} can be found within \cite{2011SquareFibonacciHamiltonian} and of Theorem~\ref{thm: Thickness and Gap Lemmas(9)} can be found within \cite{1999AstelsGapLemma1of2,1999AstelsGapLemma2of2}.
The former theorem is the classical Gap Lemma, but we require the latter theorem which is a generalized version.
\begin{thm}[Gap Lemma]\label{thm: Thickness and Gap Lemmas(7)}
Let $ K_1 $ and $ K_2 $ be nonempty compact subsets of $ \mathbb{R} $.
For each nonempty compact subset $ K $ of $ \mathbb{R} $, let $ \mathrm{Gap}_{\mathsf{b}}(K) $ be the collection of all bounded gaps of $ K $.
For each nonempty compact subset $ K $ of $ \mathbb{R} $, define $\Gamma(K)\coloneqq\sup\{\mathrm{length}(U):U\in\mathrm{Gap}_{\mathsf{b}}(K)\} $.
Assume $ [K_{1}^{\scriptscriptstyle -},K_{1}^{\scriptscriptstyle +}]\cap [K_{2}^{\scriptscriptstyle -},K_{2}^{\scriptscriptstyle +}]\ne\varnothing $, $ \Gamma(K_2)\le \mathrm{diam}(K_1) $, $ \Gamma(K_1)\le \mathrm{diam}(K_2) $, and $ 1\le \tau(K_1)\cdot\tau(K_2) $.
Then $ K_1 \cap K_2\ne\varnothing $.
Furthermore, $ K_1+K_2=[K_{1}^{\scriptscriptstyle -}+K_{2}^{\scriptscriptstyle -},K_{1}^{\scriptscriptstyle +}+K_{2}^{\scriptscriptstyle +}] $.
\end{thm}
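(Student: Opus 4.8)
The plan is to deduce the sumset identity from the intersection statement $K_1\cap K_2\neq\varnothing$ and then prove the latter by the Newhouse ping--pong argument driven by the thickness hypothesis. The inclusion $K_1+K_2\subseteq[K_{1}^{\scriptscriptstyle -}+K_{2}^{\scriptscriptstyle -},K_{1}^{\scriptscriptstyle +}+K_{2}^{\scriptscriptstyle +}]$ is immediate from $K_i\subseteq[K_{i}^{\scriptscriptstyle -},K_{i}^{\scriptscriptstyle +}]$. For the reverse inclusion, fix $t$ in that interval and put $K_2'\coloneqq t-K_2=\{t-x:x\in K_2\}$, so that $t\in K_1+K_2$ if and only if $K_1\cap K_2'\neq\varnothing$. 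The reflection $x\mapsto t-x$ carries bounded gaps to bounded gaps of equal length and planks to planks, so $\Gamma(K_2')=\Gamma(K_2)$, $\mathrm{diam}(K_2')=\mathrm{diam}(K_2)$, $\tau(K_2')=\tau(K_2)$, and $[(K_2')^{\scriptscriptstyle -},(K_2')^{\scriptscriptstyle +}]=[t-K_{2}^{\scriptscriptstyle +},t-K_{2}^{\scriptscriptstyle -}]$ meets $[K_{1}^{\scriptscriptstyle -},K_{1}^{\scriptscriptstyle +}]$ precisely because $t$ lies in the stated interval. Hence $(K_1,K_2')$ satisfies all four hypotheses, and the intersection statement for this pair gives $t\in K_1+K_2$. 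So it is enough to prove $K_1\cap K_2\neq\varnothing$.

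First I would record that the hypotheses make $K_1$ and $K_2$ \emph{linked}: their convex hulls meet, and neither convex hull is contained in a bounded gap of the other set, for if $[K_{1}^{\scriptscriptstyle -},K_{1}^{\scriptscriptstyle +}]$ were contained in a bounded gap $U$ of $K_2$ then $\mathrm{diam}(K_1)<\mathrm{length}(U)\le\Gamma(K_2)$, contradicting $\Gamma(K_2)\le\mathrm{diam}(K_1)$, and symmetrically. In particular $K_1\cap[K_{2}^{\scriptscriptstyle -},K_{2}^{\scriptscriptstyle +}]$ and $K_2\cap[K_{1}^{\scriptscriptstyle -},K_{1}^{\scriptscriptstyle +}]$ are nonempty. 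Now suppose, for contradiction, that $K_1\cap K_2=\varnothing$. Call $(U,V)$ an \emph{interlocked pair} if $U$ is a bounded gap of $K_1$, $V$ a bounded gap of $K_2$, and exactly one endpoint of each lies in the interior of the other; the \emph{bridge} $U\cap V$ is then a nonempty open interval whose closure meets $K_1\cup K_2$ only in its two endpoints, one in $K_1$ and one in $K_2$. An interlocked pair exists: take $p\in K_1\cap[K_{2}^{\scriptscriptstyle -},K_{2}^{\scriptscriptstyle +}]$, let $V$ be the bounded gap of $K_2$ containing $p$; an endpoint of $V$ lies in $K_2\subseteq\mathbb{R}\setminus K_1$ and, by linkedness, in a bounded gap $U$ of $K_1$, and then $(U,V)$ is interlocked.

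The inductive step is the heart of the matter. Given an interlocked pair $(U,V)$ with, say, $\mathrm{length}(V)\le\mathrm{length}(U)$: the plank of $U$ hinged at the endpoint of $U$ lying inside $V$ has length at least $\tau(K_1)\,\mathrm{length}(U)$, while the endpoint of $V$ lying outside $U$ is at distance less than $\mathrm{length}(V)\le\mathrm{length}(U)$ from the adjacent endpoint of $U$. If that plank overshoots the outer endpoint of $V$, then this endpoint, being a point of $K_2$ hence not of $K_1$, lies in a bounded gap of $K_1$ strictly shorter than $U$, which together with $V$ is again an interlocked pair; if it does not overshoot, the mirror-image analysis on the other side of the configuration (a plank of $V$ against the outer endpoint of $U$) shows that a simultaneous failure there would force $\tau(K_1)\tau(K_2)<1$, contrary to hypothesis. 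So the step always applies, and iterating it yields an infinite descending sequence of interlocked pairs, all lying inside the fixed bounded interval $[\min(K_{1}^{\scriptscriptstyle -},K_{2}^{\scriptscriptstyle -}),\max(K_{1}^{\scriptscriptstyle +},K_{2}^{\scriptscriptstyle +})]$. A compactness argument on the bridges then extracts a point that is simultaneously a limit of points of $K_1$ and of points of $K_2$, hence lies in $K_1\cap K_2$, contradicting the standing assumption. This establishes $K_1\cap K_2\neq\varnothing$, and the sumset identity follows by the first paragraph.

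The difficulty is concentrated in two places. The first is the plank dichotomy itself — securing the overshoot, and the arithmetic showing that a two-sided failure yields $\tau(K_1)\tau(K_2)<1$. The second is the final compactness step, where one must argue that the bridges actually collapse to a single point rather than to an interval; it is here, together with producing the first interlocked pair and guaranteeing that every gap encountered is bounded (so the construction never leaves the common convex hull), that the hypotheses $\Gamma(K_2)\le\mathrm{diam}(K_1)$ and $\Gamma(K_1)\le\mathrm{diam}(K_2)$ are spent. The non-strict inequality $1\le\tau(K_1)\tau(K_2)$ is the genuinely borderline case and calls for a limiting argument rather than a clean geometric contraction; the full details are carried out in \texttt{section~4.2} of \cite{1993PalisTakens} and in \cite{2011SquareFibonacciHamiltonian}.
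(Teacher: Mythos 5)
The paper never proves this statement itself: it is quoted as the classical Newhouse Gap Lemma, with the proof deferred to \cite{2011SquareFibonacciHamiltonian} and to \texttt{section~4.2} of \cite{1993PalisTakens}. Your proposal reconstructs exactly that classical route: the reduction of the sumset identity to the intersection statement applied to the pair $(K_1,\,t-K_2)$ is the standard trick (and your verification that the four hypotheses are reflection/translation invariant and that the convex hulls meet precisely for $t$ in the stated interval is correct), and the ping--pong on linked (``interlocked'') gaps is the Newhouse argument. The dichotomy you describe is the right one and, when written out, does work: if the plank of $K_1$ hinged at the endpoint of $U$ inside $V$ fails to reach the outer endpoint of $V$, its length is strictly less than $\mathrm{length}(V)$, so $\tau(K_1)\,\mathrm{length}(U)<\mathrm{length}(V)$, and a simultaneous failure on the mirror side gives $\tau(K_2)\,\mathrm{length}(V)<\mathrm{length}(U)$; multiplying yields the strict inequality $\tau(K_1)\tau(K_2)<1$. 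Note in particular that your closing worry about the borderline hypothesis $1\le\tau(K_1)\tau(K_2)$ is unfounded --- no limiting argument is needed, because the failure inequalities are strict. Two points where your sketch is loose but repairable: first, in the success branch one must check the new gap is \emph{bounded}; this is automatic because planks are by definition confined to $[K_1^{\scriptscriptstyle-},K_1^{\scriptscriptstyle+}]$ (and the hypotheses $\Gamma(K_i)\le\mathrm{diam}(K_j)$ are what guarantee the initial interlocked pair exists, as you say). Second, the bridges produced by the iteration are \emph{not} nested --- the new bridge sits on the far side of the replaced endpoint --- so the compactness endgame should not be phrased as bridges ``collapsing''; rather, since each step strictly shortens a gap of one of the two sets and a compact set has only finitely many gaps longer than any $\varepsilon$, the gap updated infinitely often has lengths tending to $0$, and since each bridge is contained in both of its gaps and has one endpoint in $K_1$ and one in $K_2$, a convergent subsequence of bridge endpoints produces a common limit point in $K_1\cap K_2$. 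With those two adjustments your outline is a correct and complete proof, essentially identical to the argument in the sources the paper cites.
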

\begin{thm}[Gap Lemma]\label{thm: Thickness and Gap Lemmas(9)}
Let $ K_1,\ldots,K_d $ ($ d\ge 2 $) be nonempty compact subsets of $ \mathbb{R} $.
For each nonempty compact subset $ K $ of $ \mathbb{R} $, let $ \mathrm{Gap}_{\mathsf{b}}(K) $ be the collection of all bounded gaps of $ K $.
For each nonempty compact subset $ K $ of $ \mathbb{R} $, define $\Gamma(K)\coloneqq\sup\{\mathrm{length}(U):U\in\mathrm{Gap}_{\mathsf{b}}(K)\} $.
Assume
\[ 
\begin{cases}
\left(\forall i:2\le i\le d\right)\left(\forall j:1\le j\le i-1\right)[\Gamma(K_j)\le \mathrm{diam}(K_i)],\\
\left(\forall i:2\le i\le d\right)[\Gamma(K_i)\le \mathrm{diam}(K_1)+\cdots+\mathrm{diam}(K_{i-1})].
\end{cases}
\]
If $ 1\le \frac{\tau(K_1)}{\tau(K_1)+1}+\cdots+\frac{\tau(K_d)}{\tau(K_d)+1} $, then $ \tau(K_1+\cdots+K_d)=+\infty $ and
\[ K_1+\cdots+K_d=[K_{1}^{\scriptscriptstyle -}+\cdots+K_{d}^{\scriptscriptstyle -},K_{1}^{\scriptscriptstyle +}+\cdots+K_{d}^{\scriptscriptstyle +}]. \]
If $ \frac{\tau(K_1)}{\tau(K_1)+1}+\cdots+\frac{\tau(K_d)}{\tau(K_d)+1}<1 $, then $ \tau(K_1+\cdots+K_d)\ge \frac{\frac{\tau(K_1)}{\tau(K_1)+1}+\cdots+\frac{\tau(K_d)}{\tau(K_d)+1}}{1-(\frac{\tau(K_1)}{\tau(K_1)+1}+\cdots+\frac{\tau(K_d)}{\tau(K_d)+1})} $.
\end{thm}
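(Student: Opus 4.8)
The assertion is the generalized Newhouse Gap Lemma due to S.~Astels, and I would follow his argument, organized around reducing to two sets and one quantitative estimate. Write $g(\tau)\coloneqq\frac{\tau}{\tau+1}$, a strictly increasing bijection of $[0,+\infty]$ onto $[0,1]$ (with $g(+\infty)=1$). The crux is the two-set estimate: \emph{if $K$ and $L$ are nonempty compact subsets of $\mathbb{R}$ with $\Gamma(L)\le\mathrm{diam}(K)$ and $\Gamma(K)\le\mathrm{diam}(L)$, then either $g(\tau(K))+g(\tau(L))\ge 1$ and $K+L=[K^{\scriptscriptstyle -}+L^{\scriptscriptstyle -},K^{\scriptscriptstyle +}+L^{\scriptscriptstyle +}]$, or $g(\tau(K+L))\ge g(\tau(K))+g(\tau(L))$.} Granting this, I would induct on $d$, writing $S_i\coloneqq K_1+\cdots+K_i$. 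Here $\mathrm{diam}(S_i)=\mathrm{diam}(K_1)+\cdots+\mathrm{diam}(K_i)$, and $\Gamma(S_i)\le\Gamma(S_{i-1})$ because --- using the second assumption in the statement, $\Gamma(K_i)\le\mathrm{diam}(S_{i-1})$ --- every bounded gap of $K_i+S_{i-1}$ is contained in a translate of $S_{i-1}$; together with the first assumption this gives $\Gamma(S_{i-1})\le\Gamma(K_1)\le\mathrm{diam}(K_i)$, while the second assumption is exactly $\Gamma(K_i)\le\mathrm{diam}(S_{i-1})$, so the two-set estimate applies to $S_{i-1}$ and $K_i$ at every stage. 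Iterating, $g(\tau(S_d))\ge\min\{1,\,g(\tau(K_1))+\cdots+g(\tau(K_d))\}$, with $S_d$ already an interval as soon as a partial sum reaches $1$. Since $g$ is an increasing bijection onto $[0,1]$, this is precisely the two stated conclusions: if $\sum_k g(\tau(K_k))\ge 1$ then $\tau(S_d)=+\infty$, hence $S_d$ has no bounded gap and equals its convex hull $[K_1^{\scriptscriptstyle -}+\cdots+K_d^{\scriptscriptstyle -},K_1^{\scriptscriptstyle +}+\cdots+K_d^{\scriptscriptstyle +}]$; and if $\sum_k g(\tau(K_k))<1$ then $\tau(S_d)\ge g^{-1}\big(\sum_k g(\tau(K_k))\big)=\frac{\sum_k g(\tau(K_k))}{1-\sum_k g(\tau(K_k))}$.

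It remains to prove the two-set estimate, which is the quantitative heart and where I expect the real work. It refines the ping-pong argument behind Theorem~\ref{thm: Thickness and Gap Lemmas(7)}. The starting observation is that $x\notin K+L$ precisely when $x-a\notin L$ for every $a\in K$, so a bounded gap $W$ of $K+L$ has both endpoints in $K+L$ and ``pulls back'', as $a$ runs over the relevant points of $K$, to a monotone family of bounded gaps of $L$, and dually to gaps of $K$. Given such a $W$, I would build a plank of $K+L$ on each side of it by Newhouse's alternation: from an endpoint of $W$, locate the gap of $K$ (resp.\ $L$) into which the adjacent bridge of the other set protrudes, invoke the thickness of that set to bound the bridge length below by the appropriate multiple of the current gap length, pass to the next gap on the other scale, and repeat; the hypotheses $\Gamma(L)\le\mathrm{diam}(K)$ and $\Gamma(K)\le\mathrm{diam}(L)$ keep the alternation from running off the end of either convex hull, so it either persists indefinitely --- forcing $K+L$ to be the full interval --- or halts, leaving a plank of the required length. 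The normalization $g(\tau)=\tau/(\tau+1)$ is exactly what makes the bookkeeping additive: $g(\tau(K))\ge\beta$ says that every bounded gap of $K$ occupies at most a $(1-\beta)$ fraction of itself-together-with-its-plank, so the condition $g(\tau(K))+g(\tau(L))\ge 1$ is precisely the ``budget'' at which the alternating bridges close up $W$ entirely; below that threshold the same telescoping delivers the lower bound $g(\tau(K+L))\ge g(\tau(K))+g(\tau(L))$.

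The step I expect to be the main obstacle is making this alternating accounting rigorous --- specifically: (i) the pull-back of $W$ to gaps of a summand is a monotone family rather than a single gap, and one must pick the correct representative at each stage so that successive bridge bounds chain together with no loss; and (ii) the strict inequalities built into the definitions of the left- and right-planks (each interfering gap must be \emph{strictly} shorter than the current one) have to be propagated so that the alternation is genuinely monotone and terminates --- already the delicate point in the proof of Theorem~\ref{thm: Thickness and Gap Lemmas(7)}, now compounded by two interleaved scales. These are precisely the issues resolved in Astels \cite{1999AstelsGapLemma1of2,1999AstelsGapLemma2of2}, whose treatment I would follow, adapting only its packaging to the form stated above.
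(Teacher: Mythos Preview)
The paper does not prove Theorem~\ref{thm: Thickness and Gap Lemmas(9)} at all; it simply cites Astels \cite{1999AstelsGapLemma1of2,1999AstelsGapLemma2of2} for the proof. Your proposal is a faithful outline of Astels' argument --- the induction on $d$ via the two-set superadditivity estimate $g(\tau(K+L))\ge g(\tau(K))+g(\tau(L))$ under the stated gap/diameter constraints, together with the Newhouse-type alternation to prove that estimate --- so your approach and the paper's (cited) approach coincide.

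One small remark on the inductive bookkeeping: your claim that $\Gamma(S_i)\le\Gamma(S_{i-1})$ (hence $\Gamma(S_{i-1})\le\Gamma(K_1)$) is correct but the one-line justification ``every bounded gap of $K_i+S_{i-1}$ is contained in a translate of $S_{i-1}$'' is imprecise as written; what you actually need is that any bounded gap of $S_{i-1}+K_i$ lies inside a translate of a bounded gap of $S_{i-1}$, which uses both $\Gamma(K_i)\le\mathrm{diam}(S_{i-1})$ and a short covering argument with overlapping translates $k+[S_{i-1}^{\scriptscriptstyle -},S_{i-1}^{\scriptscriptstyle +}]$. This is exactly the kind of detail handled in \cite{1999AstelsGapLemma2of2}, so your deferral there is appropriate.
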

\section{Proof of Main Theorem}\label{sec: Proof of thm: Main Theorem(2)}
\noindent%
\begin{lem}\label{lem: Proof of thm: Main Theorem(2)(2)}
Let $ A:\mathcal{H}\to\mathcal{H} $ and $ B:\mathcal{H}\to\mathcal{H} $ be bounded self-adjoint operators.
Then
\[
|\mathrm{diam}\,\sigma(A)-\mathrm{diam}\,\sigma(B)|\le 2\,\mathrm{dist}_{\mathsf{Haus}}(\sigma(A),\sigma(B))\le 2\left\lVert A-B\right\rVert.
\]
\end{lem}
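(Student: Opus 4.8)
The plan is to establish the two inequalities separately, working from right to left. For the second inequality, I would invoke the standard perturbation bound for spectra of self-adjoint operators: if $A$ and $B$ are bounded self-adjoint, then $\operatorname{dist}_{\mathsf{Haus}}(\sigma(A),\sigma(B))\le\lVert A-B\rVert$. Indeed, for any $\mu\in\sigma(A)$ one has $\operatorname{dist}(\mu,\sigma(B))\le\lVert (B-\mu)^{-1}\rVert^{-1}$ whenever $\mu\notin\sigma(B)$, and since $B-\mu$ is self-adjoint, $\lVert(B-\mu)^{-1}\rVert=\operatorname{dist}(\mu,\sigma(B))^{-1}$; combining with the resolvent identity $\lVert(B-\mu)^{-1}\rVert\le\lVert(A-\mu)^{-1}(I+(B-A)(B-\mu)^{-1})\rVert$ forces $\operatorname{dist}(\mu,\sigma(B))\le\lVert A-B\rVert$. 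Swapping the roles of $A$ and $B$ gives the symmetric statement, hence the Hausdorff bound. The factor of $2$ in the displayed inequality is simply slack, but it is harmless to state.

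For the first inequality, I would use the elementary fact that for a bounded self-adjoint operator $T$ one has $\sigma(T)\subseteq[\inf\sigma(T),\sup\sigma(T)]$ with both endpoints attained, so that $\operatorname{diam}\sigma(T)=\sup\sigma(T)-\inf\sigma(T)$. Writing $a^{\scriptscriptstyle-}=\inf\sigma(A)$, $a^{\scriptscriptstyle+}=\sup\sigma(A)$ and similarly $b^{\scriptscriptstyle\pm}$ for $B$, pick $\delta=\operatorname{dist}_{\mathsf{Haus}}(\sigma(A),\sigma(B))$. By definition of the Hausdorff distance there exist points of $\sigma(B)$ within $\delta$ of $a^{\scriptscriptstyle-}$ and of $a^{\scriptscriptstyle+}$, so $b^{\scriptscriptstyle-}\le a^{\scriptscriptstyle-}+\delta$ and $b^{\scriptscriptstyle+}\ge a^{\scriptscriptstyle+}-\delta$, whence $\operatorname{diam}\sigma(B)=b^{\scriptscriptstyle+}-b^{\scriptscriptstyle-}\ge(a^{\scriptscriptstyle+}-\delta)-(a^{\scriptscriptstyle-}+\delta)=\operatorname{diam}\sigma(A)-2\delta$. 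Exchanging $A$ and $B$ yields the reverse inequality, so $|\operatorname{diam}\sigma(A)-\operatorname{diam}\sigma(B)|\le 2\delta$, as claimed. Chaining this with the resolvent bound from the previous paragraph completes the proof.

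I do not anticipate any serious obstacle here; the statement is a routine consequence of standard self-adjoint perturbation theory, and the only mild care needed is in the direction of the Hausdorff-distance estimate — specifically in verifying that the extremal points $\inf\sigma(T)$ and $\sup\sigma(T)$ are genuinely attained (which holds because $\sigma(T)$ is compact and nonempty) and that each of them has a nearby point of the other spectrum within $\delta$. If one wanted to avoid citing the resolvent bound, an alternative for the second inequality is to use the variational (min–max) characterization of $\inf\sigma$ and $\sup\sigma$ directly: $|\inf\sigma(A)-\inf\sigma(B)|\le\lVert A-B\rVert$ and likewise for the suprema, which already gives $|\operatorname{diam}\sigma(A)-\operatorname{diam}\sigma(B)|\le 2\lVert A-B\rVert$ in one stroke; but routing through the Hausdorff distance is cleaner since that quantity is exactly what feeds into the hypotheses of the Gap Lemma (Theorem~\ref{thm: Thickness and Gap Lemmas(7)} and Theorem~\ref{thm: Thickness and Gap Lemmas(9)}) in the sequel.
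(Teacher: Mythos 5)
Your proposal is correct and follows essentially the same route as the paper: compare the spectral extrema to get $|\mathrm{diam}\,\sigma(A)-\mathrm{diam}\,\sigma(B)|\le 2\,\mathrm{dist}_{\mathsf{Haus}}(\sigma(A),\sigma(B))$, then invoke the standard self-adjoint perturbation bound $\mathrm{dist}_{\mathsf{Haus}}(\sigma(A),\sigma(B))\le\lVert A-B\rVert$ (the paper establishes this sub-step via the approximate-eigenvector identity $\mathrm{dist}(x,\sigma(B))=\inf_{\lVert\psi\rVert=1}\lVert(B-xI)\psi\rVert$ for $x\in\sigma(A)$ rather than via resolvents). The only blemish is your displayed ``resolvent identity,'' which uses $(A-\mu)^{-1}$ at a point $\mu\in\sigma(A)$ where it need not exist; run that step instead as a Neumann-series contradiction (if $\mathrm{dist}(\mu,\sigma(B))>\lVert A-B\rVert$ then $A-\mu=(B-\mu)\bigl(I-(B-\mu)^{-1}(B-A)\bigr)$ would be invertible), or simply use your own min--max alternative, which is correct as stated and already yields the claim.
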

\begin{proof}[\textcolor{black}{Proof.}]\renewcommand{\qedsymbol}{\textcolor{black}{\openbox}}%
Define $ \xi\coloneqq \min\sigma(A)-\min\sigma(B) $, $ \zeta\coloneqq \max\sigma(A)-\max\sigma(B) $.
Define $ s\curlyvee t\coloneqq \sup\left\{s,t\right\} $.
Because
\begin{align*}
|\xi|\curlyvee|\zeta|
&\textstyle\le \sup_{x\in\sigma(A)}\mathrm{dist}(x,\sigma(B))\curlyvee\sup_{x\in\sigma(B)}\mathrm{dist}(x,\sigma(A))\mathrlap{{}\eqqcolon \mathrm{dist}_{\mathsf{Haus}}(\sigma(A),\sigma(B))}\\
&\textstyle= \sup_{x\in\sigma(A)}\inf_{\left\lVert\psi\right\rVert=1}\left\lVert(B-xI)\psi\right\rVert\curlyvee\sup_{x\in\sigma(B)}\inf_{\left\lVert\psi\right\rVert=1}\left\lVert(A-xI)\psi\right\rVert\\
&\textstyle\le \sup_{x\in\sigma(A)}\left\lVert\smash{B-A}\right\rVert\curlyvee\sup_{x\in\sigma(B)}\left\lVert\smash{A-B}\right\rVert\\
&\textstyle= \left\lVert A-B\right\rVert,
\end{align*}
$ |\mathrm{diam}\,\sigma(A)-\mathrm{diam}\,\sigma(B)|=|\zeta-\xi|\le |\zeta|+|\xi|\le 2\,\mathrm{dist}_{\mathsf{Haus}}(\sigma(A),\sigma(B)) \le 2\left\lVert A-B\right\rVert $.
\end{proof}
\begin{lem}\label{lem: Proof of thm: Main Theorem(2)(3)}
Let $ H_{\lambda,\alpha,\omega}=\Delta+V_{\lambda,\alpha,\omega} $ be the almost Mathieu operator.
Fix $ \alpha\in\mathbb{T}\setminus\mathbb{Q} $.
Define $ \Sigma_\lambda\coloneqq\sigma(H_{\lambda,\alpha,\omega}) $.
Assume $ \alpha $ satisfies a Diophantine condition:
\[\textstyle \alpha\in\bigcup_{\substack{c>0\\t>1}}\bigcap_{\frac{p}{q}\in\mathbb{Q}}\left\{x\in\mathbb{R}:|qx-p|\ge \frac{c}{|q|^{t-1}}\right\}. \]
Then $ \lim_{\lambda\to 0}\tau(\Sigma_\lambda)=+\infty $.
\end{lem}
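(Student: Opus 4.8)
\noindent
The plan is to prove the quantitative estimate $\tau(\Sigma_\lambda)\ge c_1|\lambda|^{-2/3}$ for some $c_1=c_1(\alpha)>0$ and all sufficiently small $|\lambda|>0$, from which the stated limit is immediate. The argument rests on three inputs, each available because the potential $2\lambda\cos(2\pi(n\alpha+\omega))$ of the almost Mathieu operator is $1$-frequency quasiperiodic with the real-analytic sampling function $x\mapsto 2\lambda\cos(2\pi x)$ and with Diophantine frequency (the mild bookkeeping needed to match the normalizations of the Diophantine conditions used in those theorems to the one in the hypothesis is a routine rescaling by $2\pi$):
\begin{enumerate}[(1)]
\item
the Gap Labeling theorem (Theorem~\ref{thm: Integrated Density of States(2)}): the IDS $\mathfrak{N}_{\lambda}$ is continuous and nondecreasing, and every bounded gap $U$ of $\Sigma_\lambda$ is the $n$-th spectral gap $G^{(n)}$ for a unique $n=n(U)\in\mathbb{Z}\setminus\{0\}$, with $\mathfrak{N}_{\lambda}\equiv n\alpha-\lfloor n\alpha\rfloor$ on $\overline{U}$; distinct bounded gaps receive distinct labels since $\alpha$ is irrational;
\item
Hölder continuity of the IDS (Theorem~\ref{thm: Integrated Density of States(3)}): there are $C_{\mathsf{H}}>0$ and $\lambda_0>0$ with $|\mathfrak{N}_{\lambda}(x)-\mathfrak{N}_{\lambda}(y)|\le C_{\mathsf{H}}|x-y|^{1/2}$ for all $0<|\lambda|\le\lambda_0$ and all $x,y$;
\item
the Gap Estimate (Theorem~\ref{thm: Spectral Gaps(7)}): fixing $r_0>r>0$, there are $\lambda_1>0$ and $c_0>0$ with $\mathrm{length}(G^{(n)})\le (c_0|\lambda|)^{2/3}e^{-2\pi r|n|}$ for all $n\ne 0$ and $0<|\lambda|\le\lambda_1$.
\end{enumerate}
I would also fix once and for all $c>0$ and $t>1$ realizing the hypothesis, so that $|q\alpha-p|\ge c|q|^{1-t}$ for all $p\in\mathbb{Z}$ and $q\in\mathbb{Z}\setminus\{0\}$, and abbreviate $L:=(c_0|\lambda|)^{2/3}$.

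The core step is a lower bound, uniform over bounded gaps $U=(E^-,E^+)$ of $\Sigma_\lambda$, on the two plank-to-gap ratios $\mathrm{length}(\pi_{\scriptscriptstyle -}(\Sigma_\lambda,U))/\mathrm{length}(U)$ and $\mathrm{length}(\pi_{\scriptscriptstyle +}(\Sigma_\lambda,U))/\mathrm{length}(U)$. Set $\ell:=\mathrm{length}(U)$ and $n:=n(U)$. The decisive remark is that the Gap Estimate applied to $U$ rearranges to $|n|\le M:=(2\pi r)^{-1}\log(L/\ell)$: a short gap is forced to carry a label of bounded index. Consider the left-plank $\pi_{\scriptscriptstyle -}(\Sigma_\lambda,U)=[a,E^-]$; only finitely many gaps of $\Sigma_\lambda$ have length $\ge\ell$, so either no such gap lies to the left of $U$, in which case $a=\inf\Sigma_\lambda$, or $a=\sup V$ where $V$ is the nearest bounded gap to the left of $U$ with $\mathrm{length}(V)\ge\ell$. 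In the latter case let $m:=n(V)$; applying the Gap Estimate to $V$ and using $\mathrm{length}(V)\ge\ell$ gives $|m|\le M$, hence $|n-m|\le 2M$, and since $\mathfrak{N}_{\lambda}(E^-)-\mathfrak{N}_{\lambda}(\sup V)$ lies in $(0,1)$ and is congruent to $(n-m)\alpha$ modulo $1$ it equals $(n-m)\alpha-\lfloor(n-m)\alpha\rfloor\ge c|n-m|^{1-t}\ge c(2M)^{1-t}$ by the Diophantine condition (note $n\ne m$), whence by Hölder continuity
\[ \mathrm{length}\bigl(\pi_{\scriptscriptstyle -}(\Sigma_\lambda,U)\bigr)=E^--\sup V\ge C_{\mathsf{H}}^{-2}\bigl(\mathfrak{N}_{\lambda}(E^-)-\mathfrak{N}_{\lambda}(\sup V)\bigr)^{2}\ge\frac{c^{2}}{C_{\mathsf{H}}^{2}\,(2M)^{2(t-1)}}. \]
In the former case the same lower bound follows from $E^--\inf\Sigma_\lambda\ge C_{\mathsf{H}}^{-2}(\mathfrak{N}_{\lambda}(E^-))^{2}$ and $\mathfrak{N}_{\lambda}(\inf\Sigma_\lambda)=0$, distinguishing whether $\mathfrak{N}_{\lambda}(E^-)=n\alpha-\lfloor n\alpha\rfloor$ is $\le\tfrac12$ (then the Diophantine bound and $|n|\le M$ apply) or $>\tfrac12$ (then the plank already has length exceeding $1/(4C_{\mathsf{H}}^{2})$). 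The right-plank is treated identically, with $\mathfrak{N}_{\lambda}(\sup\Sigma_\lambda)=1$ playing at the right edge the role of $\mathfrak{N}_{\lambda}(\inf\Sigma_\lambda)=0$.

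To finish, I would divide by $\ell$ and take the infimum over $U$. Since $U=G^{(n)}$ with $|n|\ge 1$ we always have $\ell\le Le^{-2\pi r}$, so $\ell/L\in(0,e^{-2\pi r}]\subseteq(0,1]$, and the elementary bound $\sup_{0<s\le 1}s(\log(1/s))^{2(t-1)}=(2(t-1)/e)^{2(t-1)}=:\kappa$, applied to $s=\ell/L$, gives $(2M)^{2(t-1)}\ell=(\pi r)^{-2(t-1)}\ell(\log(L/\ell))^{2(t-1)}\le(\pi r)^{-2(t-1)}\kappa L$. The displayed plank bound then yields $\mathrm{length}(\pi_{\scriptscriptstyle -}(\Sigma_\lambda,U))/\ell\ge c^{2}(\pi r)^{2(t-1)}/(C_{\mathsf{H}}^{2}\kappa L)$, while the remaining sub-case gives the crude bound $>e^{2\pi r}/(4C_{\mathsf{H}}^{2}L)$; the same holds for the right-plank. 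Taking the minimum of these constants and then the infimum over all bounded gaps $U$, we obtain $\tau(\Sigma_\lambda)\ge\mathrm{const}(\alpha,r)\,(c_0|\lambda|)^{-2/3}$ for $0<|\lambda|<\min(\lambda_0,\lambda_1)$, which tends to $+\infty$ as $\lambda\to 0$.

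The main obstacle is securing the uniformity of the plank estimate, and the insight that makes it possible is that one must not try to control the gap $U$ through its length alone but through its label index $n(U)$ — precisely the quantity bounded by the Gap Estimate — after which the Diophantine condition together with Hölder continuity upgrades the near-coincidence of two gap labels into a genuine separation inside the spectrum. The remaining delicate point is checking that the resulting bound $c^{2}C_{\mathsf{H}}^{-2}(2M)^{2(1-t)}/\ell$ does not degenerate as $\ell$ ranges over the whole admissible interval $(0,Le^{-2\pi r}]$, which is exactly the content of the elementary estimate on $s(\log(1/s))^{2(t-1)}$; the two edge gaps, where no obstructing gap need exist, must be handled separately but present no difficulty.
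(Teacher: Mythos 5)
Your argument is correct and is essentially the paper's own proof: the paper proves the abstract Lemma~\ref{lem: Proof of lem: Proof of thm: Main Theorem(2)(3)(1)} by combining exactly your three inputs (gap labeling, the Hölder continuity of the IDS from Theorem~\ref{thm: Integrated Density of States(3)}, and the exponential gap estimate from Theorem~\ref{thm: Spectral Gaps(7)}) with the same mechanism --- the far endpoint of a plank is either an extremum of the spectrum or the near endpoint of a longer gap whose label is bounded via the gap estimate, after which the Diophantine condition and Hölder continuity give the uniform plank lower bound. Your only deviations are cosmetic: you keep the explicit exponents, yielding the quantitative rate $\tau(\Sigma_\lambda)\ge c_1|\lambda|^{-2/3}$, and you spell out the optimization of $s(\log(1/s))^{2(t-1)}$ that the paper leaves implicit in its concluding step.
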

\noindent%
The proof of Lemma~\ref{lem: Proof of thm: Main Theorem(2)(3)} can be found within \textsc{section~\ref{sec: Proof of lem: Proof of thm: Main Theorem(2)(3)}}.
\begin{thm}[Main Theorem]\label{thm: Proof of thm: Main Theorem(2)(5)}
Fix $ \alpha_1,\ldots,\alpha_d,\omega_1,\ldots,\omega_d\in [0,1]\eqqcolon \mathbb{T} $.
Let $ \vphantom{H}\smash{\Owidehat{H}} $ be the bounded self-adjoint operator $ \vphantom{H}\smash{\Owidehat{H}}:\ell^2(\mathbb{Z}^d)\to \ell^2(\mathbb{Z}^d) $ defined by
\[
\textstyle[\vphantom{H}\smash{\Owidehat{H}}\psi](n)=\big(\sum_{m\in \left\{e_1,\ldots,e_d\right\}}\psi(n+m)+\psi(n-m)\big)+\big(\sum_{k\in\left\{1,\ldots,d\right\}}2\lambda_k\cos(2\pi(n_k\alpha_k+\omega_k))\big)\psi(n)
\]
for every $ \psi\in\ell^2(\mathbb{Z}^d),n\in\mathbb{Z}^d $; $ \left\{e_1,\ldots,e_d\right\} $ is the standard basis.
\par
Let $ H_{\lambda,\alpha,\omega}=\Delta+V_{\lambda,\alpha,\omega} $ be the almost Mathieu operator.
For each $ k $, define $ \Sigma_k\coloneqq\sigma(H_{\lambda_k,\alpha_k,\omega_k}) $.
Assume $ \alpha_1,\ldots,\alpha_d $ ($ d\ge 2 $) are irrational and satisfy a Diophantine condition:
\[\textstyle \alpha\in\bigcup_{\substack{c>0\\t>1}}\bigcap_{\frac{p}{q}\in\mathbb{Q}}\left\{x\in\mathbb{R}:|qx-p|\ge \frac{c}{|q|^{t-1}}\right\}. \]
There exists $ \varepsilon=\varepsilon(\alpha_1,\ldots,\alpha_d)>0 $ such that if $ 0<|\lambda_1|,\ldots,|\lambda_d|<\varepsilon $, then $ \sigma(\vphantom{H}\smash{\Owidehat{H}}) $, which is a sum of Cantor spectra $ \Sigma_1+\cdots+\Sigma_d $, is an interval.
\end{thm}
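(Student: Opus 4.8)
The plan is to verify the hypotheses of the generalized Gap Lemma of S.~Astels (Theorem~\ref{thm: Thickness and Gap Lemmas(9)}) for the compact sets $K_k=\Sigma_k$ and then to identify the resulting interval $\Sigma_1+\cdots+\Sigma_d$ with $\sigma(\widehat{H})$ via the separability identity of Theorem~\ref{thm: Separable Potentials and The Laplacian(2)}. The two inputs that force those hypotheses to hold once the couplings are small are the Hausdorff-perturbation estimate of Lemma~\ref{lem: Proof of thm: Main Theorem(2)(2)}, which keeps the maximal gaps of $\Sigma_k$ small and $\mathrm{diam}(\Sigma_k)$ close to $4$, and the thickness blow-up of Lemma~\ref{lem: Proof of thm: Main Theorem(2)(3)}, which supplies the thickness condition $1\le\sum_k\tfrac{\tau(\Sigma_k)}{\tau(\Sigma_k)+1}$.

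First I would control the geometry of each $\Sigma_k$ near $\lambda_k=0$. Applying Lemma~\ref{lem: Proof of thm: Main Theorem(2)(2)} with $A=H_{\lambda_k,\alpha_k,\omega_k}$ and $B=\Delta^{(1)}$, and using $\sigma(\Delta^{(1)})=[-2,2]$ together with $\lVert H_{\lambda_k,\alpha_k,\omega_k}-\Delta^{(1)}\rVert=\lVert V_{\lambda_k,\alpha_k,\omega_k}\rVert=2|\lambda_k|$, gives
\[
\delta_k\coloneqq\mathrm{dist}_{\mathsf{Haus}}(\Sigma_k,[-2,2])\le 2|\lambda_k|\qquad\text{and}\qquad |\mathrm{diam}(\Sigma_k)-4|\le 4|\lambda_k|.
\]
A short argument then bounds every bounded gap of $\Sigma_k$ by $2\delta_k$ in length: since $\Sigma_k$ is a $\delta_k$-net for $[-2,2]$, a bounded gap $(a,b)$ of $\Sigma_k$ has $(a+\delta_k,b-\delta_k)$ disjoint from $[-2,2]$, while $[a+\delta_k,b-\delta_k]\subseteq[-2,2]$ whenever it is nonempty, which forces $b-a\le 2\delta_k$. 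Hence $\Gamma(\Sigma_k)\le 4|\lambda_k|\to 0$ and $\mathrm{diam}(\Sigma_k)\ge 4-4|\lambda_k|\to 4$ as $\lambda_k\to 0$, and these bounds are uniform in the phase $\omega_k$ because they come only from the operator-norm estimate.

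Next I would invoke Lemma~\ref{lem: Proof of thm: Main Theorem(2)(3)} once for each of the fixed Diophantine frequencies $\alpha_k$: since $\tau(\Sigma_k)\to+\infty$ as $\lambda_k\to 0$, there is $\varepsilon_k=\varepsilon_k(\alpha_k)>0$ such that $0<|\lambda_k|<\varepsilon_k$ forces $\tau(\Sigma_k)>1$, hence $\tfrac{\tau(\Sigma_k)}{\tau(\Sigma_k)+1}>\tfrac12$. Taking $\varepsilon\coloneqq\min\{\tfrac14,\varepsilon_1,\ldots,\varepsilon_d\}$ and assuming $0<|\lambda_1|,\ldots,|\lambda_d|<\varepsilon$, one has $\Gamma(\Sigma_k)<1$ and $\mathrm{diam}(\Sigma_k)>3$ for every $k$, so for all $i\ge 2$ and $j<i$ both $\Gamma(\Sigma_j)<\mathrm{diam}(\Sigma_i)$ and $\Gamma(\Sigma_i)<\mathrm{diam}(\Sigma_1)+\cdots+\mathrm{diam}(\Sigma_{i-1})$ hold, i.e.\ the gap/diameter hypotheses of Theorem~\ref{thm: Thickness and Gap Lemmas(9)} are satisfied; and $\tfrac{\tau(\Sigma_1)}{\tau(\Sigma_1)+1}+\cdots+\tfrac{\tau(\Sigma_d)}{\tau(\Sigma_d)+1}>\tfrac d2\ge 1$ because there are $d\ge 2$ summands. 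Theorem~\ref{thm: Thickness and Gap Lemmas(9)} then yields $\Sigma_1+\cdots+\Sigma_d=[\Sigma_1^{\scriptscriptstyle -}+\cdots+\Sigma_d^{\scriptscriptstyle -},\Sigma_1^{\scriptscriptstyle +}+\cdots+\Sigma_d^{\scriptscriptstyle +}]$, which by Theorem~\ref{thm: Separable Potentials and The Laplacian(2)} equals $\sigma(\widehat{H})$, completing the proof.

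I expect no real obstacle in this assembly; it is essentially bookkeeping, the only care being to take a single $\varepsilon=\min_k\varepsilon_k$ over the finitely many indices and to note that the gap and diameter bounds are phase-independent. The genuinely hard part is Lemma~\ref{lem: Proof of thm: Main Theorem(2)(3)} itself, namely that $\tau(\Sigma_{\lambda,\alpha,\omega})\to+\infty$ as $\lambda\to 0$ for Diophantine $\alpha$: that is where the Diophantine hypothesis is genuinely used and where the perturbative character of the theorem — unavoidable by Theorem~\ref{thm: B} — enters, and it is deferred to Section~\ref{sec: Proof of lem: Proof of thm: Main Theorem(2)(3)}, where it is derived by combining the exponential spectral-gap bound of Theorem~\ref{thm: Spectral Gaps(7)} with the $\tfrac12$-Hölder continuity of the integrated density of states from Theorem~\ref{thm: Integrated Density of States(3)}, so that the gaps become small relative to the surrounding spectral planks.
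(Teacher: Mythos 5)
Your proposal is correct and follows essentially the same route as the paper: control $\Gamma(\Sigma_k)$ and $\mathrm{diam}(\Sigma_k)$ via Lemma~\ref{lem: Proof of thm: Main Theorem(2)(2)} with $B=\Delta^{(1)}$, get the thickness condition from Lemma~\ref{lem: Proof of thm: Main Theorem(2)(3)}, apply the Astels Gap Lemma (Theorem~\ref{thm: Thickness and Gap Lemmas(9)}), and identify $\sigma(\widehat{H})=\Sigma_1+\cdots+\Sigma_d$ by Theorem~\ref{thm: Separable Potentials and The Laplacian(2)}. The only difference is that you make explicit the quantitative bookkeeping (the $\delta_k$-net bound $\Gamma(\Sigma_k)\le 2\delta_k$ and the choice $\varepsilon=\min\{\tfrac14,\varepsilon_1,\ldots,\varepsilon_d\}$) that the paper leaves implicit in its ``as a result'' step.
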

\begin{proof}[\textcolor{black}{Proof.}]\renewcommand{\qedsymbol}{\textcolor{black}{$ \blacksquare $}}%
For each nonempty compact subset $ K $ of $ \mathbb{R} $, let $ \mathrm{Gap}_{\mathsf{b}}(K) $ be the collection of all bounded gaps of $ K $.
For each nonempty compact subset $ K $ of $ \mathbb{R} $, define $\Gamma(K)\coloneqq\sup\{\mathrm{length}(U):U\in\mathrm{Gap}_{\mathsf{b}}(K)\} $.
By Lemma~\ref{lem: Proof of thm: Main Theorem(2)(2)}, $ \lim_{\lambda\to 0}\mathrm{dist}_{\mathsf{Haus}}(\sigma(H_{\lambda,\alpha,\omega}),[-2,2])=0 $ and $ \lim_{\lambda\to 0}|\mathrm{diam}\,\sigma(H_{\lambda,\alpha,\omega})-4|=0 $.
By Lemma~\ref{lem: Proof of thm: Main Theorem(2)(3)}, $ \lim_{\lambda\to 0}\tau(\sigma(H_{\lambda,\alpha_k,\omega}))=+\infty $.
As a result, there exists $ \varepsilon=\varepsilon(\alpha_1,\ldots,\alpha_d)>0 $ such that if $ 0<|\lambda_1|,\ldots,|\lambda_d|<\varepsilon $, then
\[
\begin{cases}
\left(\forall i:2\le i\le d\right)\left(\forall j:1\le j\le i-1\right)[\Gamma(\Sigma_j)\le \mathrm{diam}(\Sigma_i)],\\
\left(\forall i:2\le i\le d\right)[\Gamma(\Sigma_i)\le \mathrm{diam}(\Sigma_1)+\cdots+\mathrm{diam}(\Sigma_{i-1})],\\
1\le \frac{\tau(\Sigma_1)}{\tau(\Sigma_1)+1}+\cdots+\frac{\tau(\Sigma_d)}{\tau(\Sigma_d)+1}.
\end{cases}
\]
By Theorem~\ref{thm: Thickness and Gap Lemmas(9)}, $ \Sigma_1+\cdots+\Sigma_d $ is an interval.
By Theorem~\ref{thm: Separable Potentials and The Laplacian(2)}, $ \sigma(\vphantom{H}\smash{\Owidehat{H}})=\Sigma_1+\cdots+\Sigma_d $.
\end{proof}
\section{Proof of Lemma~\ref{lem: Proof of thm: Main Theorem(2)(3)}}\label{sec: Proof of lem: Proof of thm: Main Theorem(2)(3)}
\noindent%
\begin{lem}\label{lem: Proof of lem: Proof of thm: Main Theorem(2)(3)(1)}
Let $ V_{\lambda,\alpha,\omega}:\mathbb{Z}\to\mathbb{R}:n\mapsto \lambda v\big|_{\mathbb{T}^b}(n\alpha+\omega) $ be a quasiperiodic potential with parameters $ \lambda,\alpha,\omega $.
Let $ H_{\lambda,\alpha,\omega} $ be the Schrödinger operator.
Fix $ \alpha $.
Let $ \mathfrak{N}_{\lambda} $ be the IDS.
Define $ \Sigma_{\lambda}\coloneqq\sigma(H_{\lambda,\alpha,\omega}) $.
For each $ \mathbf{n}\ne \mathbf{0} $, let $ \ooalign{$ G_{\lambda}$\cr
	$\phantom{G}^{(\mathbf{n})}$\cr
	$\phantom{G^{(\mathbf{n})}_{\lambda}}$}=(\ooalign{$ E_{\lambda}$\cr
	$\phantom{E}^{(\mathbf{n}){\scriptscriptstyle -}}$\cr
	$\phantom{E^{(\mathbf{n}){\scriptscriptstyle -}}_{\lambda}}$},\ooalign{$ E_{\lambda}$\cr
	$\phantom{E}^{(\mathbf{n}){\scriptscriptstyle +}}$\cr
	$\phantom{E^{(\mathbf{n}){\scriptscriptstyle +}}_{\lambda}}$}) $ be the $ \mathbf{n} $-th spectral gap of $ \Sigma_{\lambda} $.
Assume the following.
\begin{enumerate}[(i)]
\item
$ \alpha $ satisfies a Diophantine condition:
\[\textstyle \alpha\in\bigcap_{\flsubstack{p\in\mathbb{Z}\\\smash{\mathbf{q}\in\mathbb{Z}^b\setminus\left\{\mathbf{0}\right\}}}}\left\{\mathbf{x}\in\mathbb{R}^b:|\mathbf{q}\mathbf{x}-p|\ge \frac{c}{\lVert\mathbf{q}\rVert^{t-1}}\right\}\mathrlap{{}\eqqcolon \mathsf{DC}{}^{b}(c,t)} \]
for some $ c>0,t>b $.
\item
There exist $ \lambda_0=\lambda_0(b,c,t)>0, C_{\mathsf{H}}=C_{\mathsf{H}}(b,c,t)>0,1\ge h=h(b,c,t)>0 $ such that
\[
|\mathfrak{N}_\lambda(x)-\mathfrak{N}_\lambda(y)|\le C_{\mathsf{H}}|x-y|^{h}
\]
for every $ \lambda_0\ge |\lambda|>0,x,y $.
\item
There exist $ \lambda_1=\lambda_1(b,c,t)>0, C(\lambda)=C(\lambda,b,c,t)>0, C_{\mathsf{E}}=C_{\mathsf{E}}(b,c,t)>0 $ such that
\[
|\ooalign{$ E_{\lambda}$\cr
	$\phantom{E}^{(\mathbf{n}){\scriptscriptstyle +}}$\cr
	$\phantom{E^{(\mathbf{n}){\scriptscriptstyle +}}_{\lambda}}$}-\ooalign{$ E_{\lambda}$\cr
	$\phantom{E}^{(\mathbf{n}){\scriptscriptstyle -}}$\cr
	$\phantom{E^{(\mathbf{n}){\scriptscriptstyle -}}_{\lambda}}$}|\le C(\lambda)e^{-C_{\mathsf{E}}\lVert\mathbf{n}\rVert}
\]
for every $ \lambda_1\ge |\lambda|>0,\mathbf{n}\ne \mathbf{0} $ and $ \lim_{\lambda\to 0}C(\lambda)=0 $.
\end{enumerate}
Then $\textstyle \lim_{\lambda\to 0}\tau(\Sigma_{\lambda})=+\infty $.
\end{lem}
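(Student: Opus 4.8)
The plan is to bound the local thickness $\tau(\Sigma_\lambda,U)$ from below by $\kappa/C(\lambda)$, uniformly over all bounded gaps $U$ of $\Sigma_\lambda$, for a constant $\kappa>0$ depending only on $b,c,t,h,C_{\mathsf{H}},C_{\mathsf{E}}$; since $C(\lambda)\to 0$ by hypothesis~(iii), passing to the infimum over $U$ then gives $\tau(\Sigma_\lambda)=\inf_{U}\tau(\Sigma_\lambda,U)\to+\infty$. (If $\Sigma_\lambda$ has no bounded gaps there is nothing to prove.) Throughout, $|\lambda|$ is small enough that (ii) and (iii) apply.

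First I would record the arithmetic and spectral input. By Gap Labeling (Theorem~\ref{thm: Integrated Density of States(2)}) and continuity of $\mathfrak{N}_\lambda$, the IDS is constant on the closure of each bounded gap of $\Sigma_\lambda$, which is therefore the spectral gap $G_\lambda^{(\mathbf{n})}$ for a $\mathbf{n}\in\mathbb{Z}^{b}\setminus\{\mathbf{0}\}$ on which $\mathfrak{N}_\lambda\equiv\beta_{\mathbf{n}}:=\mathbf{n}\alpha-\lfloor\mathbf{n}\alpha\rfloor\in(0,1)$; by the Diophantine condition this label is unique, and since $\Sigma_\lambda$ has no isolated points (Theorem~\ref{thm: Quasiperiodic Potentials and The Almost Mathieu Operator(2)}) distinct bounded gaps have distinct labels and are separated by genuine spectrum. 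Write $\ell_{\mathbf{n}}$ for the length of $G_\lambda^{(\mathbf{n})}$. Elementary manipulation of fractional parts using $\alpha\in\mathsf{DC}^{b}(c,t)$ gives $\min\{\beta_{\mathbf{n}},1-\beta_{\mathbf{n}}\}\ge c\|\mathbf{n}\|^{-(t-1)}$ for $\mathbf{n}\ne\mathbf{0}$ and $|\beta_{\mathbf{n}}-\beta_{\mathbf{m}}|\ge c\|\mathbf{n}-\mathbf{m}\|^{-(t-1)}\ge c(\|\mathbf{n}\|+\|\mathbf{m}\|)^{-(t-1)}$ for $\mathbf{n}\ne\mathbf{m}$. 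The gap estimate~(iii) gives $\ell_{\mathbf{n}}\le C(\lambda)e^{-C_{\mathsf{E}}\|\mathbf{n}\|}<C(\lambda)$, so with $L=L(\mathbf{n}):=\ln(C(\lambda)/\ell_{\mathbf{n}})>0$ we have $\|\mathbf{n}\|\le L/C_{\mathsf{E}}$; as $\|\mathbf{n}\|\ge1$ this also forces $L\ge C_{\mathsf{E}}$.

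Now the crux. Fix a bounded gap $U=G_\lambda^{(\mathbf{n})}$ and consider its left plank $\pi_{-}(\Sigma_\lambda,U)=[a,b]$, a nondegenerate interval whose right endpoint $b$ is the left endpoint of $U$. By definition of the plank, $a$ is either $\inf\Sigma_\lambda$ — in which case put $\mathbf{m}:=\mathbf{0}$, $\beta_{\mathbf{0}}:=0$ — or the right endpoint of a bounded gap $G_\lambda^{(\mathbf{m})}$ lying strictly to the left of $U$ with $\ell_{\mathbf{m}}\ge\ell_{\mathbf{n}}$; in the latter case $\mathbf{m}\ne\mathbf{n}$ and $\ell_{\mathbf{m}}\ge\ell_{\mathbf{n}}$ forces $\|\mathbf{m}\|\le L/C_{\mathsf{E}}$ as well. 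The IDS rises from $\beta_{\mathbf{m}}$ to $\beta_{\mathbf{n}}$ across $[a,b]$, so Hölder continuity~(ii) gives $\mathrm{length}(\pi_{-})=b-a\ge(|\beta_{\mathbf{n}}-\beta_{\mathbf{m}}|/C_{\mathsf{H}})^{1/h}$; inserting the Diophantine lower bound on $|\beta_{\mathbf{n}}-\beta_{\mathbf{m}}|$, the inequality $\|\mathbf{n}\|+\|\mathbf{m}\|\le 2L/C_{\mathsf{E}}$, and $\ell_{\mathbf{n}}=C(\lambda)e^{-L}$, and arguing identically for the right plank, we get for $\bullet\in\{-,+\}$
\[
\frac{\mathrm{length}(\pi_{\bullet}(\Sigma_\lambda,U))}{\ell_{\mathbf{n}}} \ge \frac{(c/C_{\mathsf{H}})^{1/h}\,(C_{\mathsf{E}}/2)^{(t-1)/h}}{C(\lambda)}\cdot\frac{e^{L}}{L^{(t-1)/h}}.
\]
On $[C_{\mathsf{E}},\infty)$ the map $L\mapsto e^{L}L^{-(t-1)/h}$ is continuous, positive, and tends to $+\infty$, hence attains a positive minimum $m_0=m_0(C_{\mathsf{E}},t,h)$; thus $\tau(\Sigma_\lambda,U)\ge\kappa/C(\lambda)$ with $\kappa:=(c/C_{\mathsf{H}})^{1/h}(C_{\mathsf{E}}/2)^{(t-1)/h}m_0$, uniformly in $U$. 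Taking the infimum over bounded gaps and letting $\lambda\to0$ finishes the proof.

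The step I expect to be the main obstacle is this crux paragraph: one must see that whatever terminates a plank is itself a gap of length at least $\ell_{\mathbf{n}}$, translate this through the exponential gap estimate into the logarithmic bound $\|\mathbf{m}\|\lesssim\ln(1/\ell_{\mathbf{n}})$, and then verify that the competition between the exponential smallness of $\ell_{\mathbf{n}}$ (the factor $e^{L}$ in the numerator), the merely polynomial Diophantine loss (the factor $L^{(t-1)/h}$ in the denominator), and the Hölder exponent $h$ leaves precisely the divergent factor $1/C(\lambda)$, with every surviving constant genuinely independent of $\mathbf{n}$ and $\lambda$. The spectral-theoretic ingredients are imported wholesale from the preliminaries; the rest is this elementary but somewhat delicate estimate.
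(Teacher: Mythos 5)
Your proposal is correct and follows essentially the same route as the paper's own proof: every bounded gap is a labeled spectral gap, the far endpoint of a plank is either an extremum of $\Sigma_\lambda$ or the endpoint of a gap at least as long (hence with label of norm at most $L/C_{\mathsf{E}}$, the paper's $\kappa_\lambda(\mathbf{n})$), and the Diophantine bound plus H\"older continuity of $\mathfrak{N}_\lambda$ against the exponential gap estimate yields a lower bound on each local thickness of the form $\mathrm{const}\cdot e^{L}L^{-(t-1)/h}/C(\lambda)\ge \kappa/C(\lambda)\to+\infty$. The only differences are presentational (you work with the left plank and make the uniform minimum $m_0$ and the $\beta_{\mathbf{0}}=0$, $\beta=1$ boundary cases explicit, where the paper takes the length-minimal plank and argues the analogous two cases).
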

\begin{proof}[\textcolor{black}{Proof.}]\renewcommand{\qedsymbol}{\textcolor{black}{\openbox}}%
For each $ \mathbf{n}\ne\mathbf{0} $, let $ \pi(\Sigma_\lambda,\ooalign{$ G_{\lambda}$\cr
	$\phantom{G}^{(\mathbf{n})}$\cr
	$\phantom{G^{(\mathbf{n})}_{\lambda}}$}) $ be a length-minimal plank of $ \ooalign{$ G_{\lambda}$\cr
	$\phantom{G}^{(\mathbf{n})}$\cr
	$\phantom{G^{(\mathbf{n})}_{\lambda}}$}\ne\varnothing $.
Without loss of generality, assume
\[ \pi(\Sigma_\lambda,\ooalign{$ G_{\lambda}$\cr
	$\phantom{G}^{(\mathbf{n})}$\cr
	$\phantom{G^{(\mathbf{n})}_{\lambda}}$})=\pi_{\scriptscriptstyle +}(\Sigma_\lambda,\ooalign{$ G_{\lambda}$\cr
	$\phantom{G}^{(\mathbf{n})}$\cr
	$\phantom{G^{(\mathbf{n})}_{\lambda}}$})=[\ooalign{$ E_{\lambda}$\cr
	$\phantom{E}^{(\mathbf{n}){\scriptscriptstyle +}}$\cr
	$\phantom{E^{(\mathbf{n}){\scriptscriptstyle +}}_{\lambda}}$},\ooalign{$ F_{\lambda}$\cr
	$\phantom{F}^{(\mathbf{n})}$\cr
	$\phantom{F^{(\mathbf{n})}_{\lambda}}$}] \]
for some $ \ooalign{$ F_{\lambda}$\cr
	$\phantom{F}^{(\mathbf{n})}$\cr
	$\phantom{F^{(\mathbf{n})}_{\lambda}}$}\in \Sigma_\lambda $.
Observe
\begin{align*}
\tau(\Sigma_{\lambda},\ooalign{$ G_{\lambda}$\cr
	$\phantom{G}^{(\mathbf{n})}$\cr
	$\phantom{G^{(\mathbf{n})}_{\lambda}}$})=\frac{\mathrm{length}(\pi(\Sigma_\lambda,\ooalign{$ G_{\lambda}$\cr
	$\phantom{G}^{(\mathbf{n})}$\cr
	$\phantom{G^{(\mathbf{n})}_{\lambda}}$}))}{\mathrm{length}(\ooalign{$ G_{\lambda}$\cr
	$\phantom{G}^{(\mathbf{n})}$\cr
	$\phantom{G^{(\mathbf{n})}_{\lambda}}$})}
= \frac{|\ooalign{$ F_{\lambda}$\cr
	$\phantom{F}^{(\mathbf{n})}$\cr
	$\phantom{F^{(\mathbf{n})}_{\lambda}}$}-\ooalign{$ E_{\lambda}$\cr
	$\phantom{E}^{(\mathbf{n}){\scriptscriptstyle +}}$\cr
	$\phantom{E^{(\mathbf{n}){\scriptscriptstyle +}}_{\lambda}}$}|}{|\ooalign{$ E_{\lambda}$\cr
	$\phantom{E}^{(\mathbf{n}){\scriptscriptstyle +}}$\cr
	$\phantom{E^{(\mathbf{n}){\scriptscriptstyle +}}_{\lambda}}$}-\ooalign{$ E_{\lambda}$\cr
	$\phantom{E}^{(\mathbf{n}){\scriptscriptstyle -}}$\cr
	$\phantom{E^{(\mathbf{n}){\scriptscriptstyle -}}_{\lambda}}$}|}.
\end{align*}
Because $ \Sigma_\lambda $ has no isolated points,
\[ \ooalign{$ E_{\lambda}$\cr
	$\phantom{E}^{(\mathbf{n}){\scriptscriptstyle +}}$\cr
	$\phantom{E^{(\mathbf{n}){\scriptscriptstyle +}}_{\lambda}}$}<\ooalign{$ F_{\lambda}$\cr
	$\phantom{F}^{(\mathbf{n})}$\cr
	$\phantom{F^{(\mathbf{n})}_{\lambda}}$}\le \sup\Sigma_{\lambda}. \]
Temporarily, assume $ \ooalign{$ F_{\lambda}$\cr
	$\phantom{F}^{(\mathbf{n})}$\cr
	$\phantom{F^{(\mathbf{n})}_{\lambda}}$}=\sup\Sigma_{\lambda} $.
Therefore
\[ |\mathfrak{N}_\lambda(\ooalign{$ F_{\lambda}$\cr
	$\phantom{F}^{(\mathbf{n})}$\cr
	$\phantom{F^{(\mathbf{n})}_{\lambda}}$})-\mathfrak{N}_\lambda(\ooalign{$ E_{\lambda}$\cr
	$\phantom{E}^{(\mathbf{n}){\scriptscriptstyle +}}$\cr
	$\phantom{E^{(\mathbf{n}){\scriptscriptstyle +}}_{\lambda}}$})|=|1-(\mathbf{n}\alpha-\lfloor\mathbf{n}\alpha\rfloor)|=|\mathbf{n} \alpha -\lfloor\mathbf{n} \alpha\rfloor-1|. \]
By \textit{(i--iii)}, for sufficiently small $ \lambda $,
\begin{align*}
\frac{|\ooalign{$ F_{\lambda}$\cr
	$\phantom{F}^{(\mathbf{n})}$\cr
	$\phantom{F^{(\mathbf{n})}_{\lambda}}$}-\ooalign{$ E_{\lambda}$\cr
	$\phantom{E}^{(\mathbf{n}){\scriptscriptstyle +}}$\cr
	$\phantom{E^{(\mathbf{n}){\scriptscriptstyle +}}_{\lambda}}$}|}{|\ooalign{$ E_{\lambda}$\cr
	$\phantom{E}^{(\mathbf{n}){\scriptscriptstyle +}}$\cr
	$\phantom{E^{(\mathbf{n}){\scriptscriptstyle +}}_{\lambda}}$}-\ooalign{$ E_{\lambda}$\cr
	$\phantom{E}^{(\mathbf{n}){\scriptscriptstyle -}}$\cr
	$\phantom{E^{(\mathbf{n}){\scriptscriptstyle -}}_{\lambda}}$}|}
&\ge \frac{(\frac{1}{C_{\mathsf{H}}})^{\frac{1}{h}}|\mathfrak{N}_\lambda(\ooalign{$ F_{\lambda}$\cr
	$\phantom{F}^{(\mathbf{n})}$\cr
	$\phantom{F^{(\mathbf{n})}_{\lambda}}$})-\mathfrak{N}_\lambda(\ooalign{$ E_{\lambda}$\cr
	$\phantom{E}^{(\mathbf{n}){\scriptscriptstyle +}}$\cr
	$\phantom{E^{(\mathbf{n}){\scriptscriptstyle +}}_{\lambda}}$})|^{\frac{1}{h}}}{|\ooalign{$ E_{\lambda}$\cr
	$\phantom{E}^{(\mathbf{n}){\scriptscriptstyle +}}$\cr
	$\phantom{E^{(\mathbf{n}){\scriptscriptstyle +}}_{\lambda}}$}-\ooalign{$ E_{\lambda}$\cr
	$\phantom{E}^{(\mathbf{n}){\scriptscriptstyle -}}$\cr
	$\phantom{E^{(\mathbf{n}){\scriptscriptstyle -}}_{\lambda}}$}|}
\ge \frac{(\frac{1}{C_{\mathsf{H}}})^{\frac{1}{h}}|\frac{c}{\lVert\mathbf{n}\rVert^{t-1}}|^{\frac{1}{h}}}{|\ooalign{$ E_{\lambda}$\cr
	$\phantom{E}^{(\mathbf{n}){\scriptscriptstyle +}}$\cr
	$\phantom{E^{(\mathbf{n}){\scriptscriptstyle +}}_{\lambda}}$}-\ooalign{$ E_{\lambda}$\cr
	$\phantom{E}^{(\mathbf{n}){\scriptscriptstyle -}}$\cr
	$\phantom{E^{(\mathbf{n}){\scriptscriptstyle -}}_{\lambda}}$}|}
\ge \frac{(\frac{c}{C_{\mathsf{H}}})^{{1}/{h}}}{C(\lambda)e^{-C_{\mathsf{E}}\lVert\mathbf{n}\rVert}\lVert\mathbf{n}\rVert^{{(t-1)}/{h}}}.
\end{align*}
Temporarily, assume $ \ooalign{$ F_{\lambda}$\cr
	$\phantom{F}^{(\mathbf{n})}$\cr
	$\phantom{F^{(\mathbf{n})}_{\lambda}}$}< \sup\Sigma_{\lambda} $.
Therefore
\[ |\mathfrak{N}_\lambda(\ooalign{$ F_{\lambda}$\cr
	$\phantom{F}^{(\mathbf{n})}$\cr
	$\phantom{F^{(\mathbf{n})}_{\lambda}}$})-\mathfrak{N}_\lambda(\ooalign{$ E_{\lambda}$\cr
	$\phantom{E}^{(\mathbf{n}){\scriptscriptstyle +}}$\cr
	$\phantom{E^{(\mathbf{n}){\scriptscriptstyle +}}_{\lambda}}$})|=|(\mathbf{m}_\mathbf{n}\alpha-\lfloor\mathbf{m}_\mathbf{n}\alpha\rfloor)-(\mathbf{n}\alpha-\lfloor\mathbf{n}\alpha\rfloor)|=|(\mathbf{m}_\mathbf{n}-\mathbf{n}) \alpha -(\lfloor\mathbf{m}_\mathbf{n}\alpha\rfloor-\lfloor\mathbf{n} \alpha\rfloor)| \]
for some $ \mathbf{m}_\mathbf{n} $ such that $ \ooalign{$ G_{\lambda}$\cr
	$\phantom{G}^{(\mathbf{m}_\mathbf{n})}$\cr
	$\phantom{G^{(\mathbf{m}_\mathbf{n})}_{\lambda}}$}\ne\varnothing $ and $ \mathrm{length}(\ooalign{$ G_{\lambda}$\cr
	$\phantom{G}^{(\mathbf{n})}$\cr
	$\phantom{G^{(\mathbf{n})}_{\lambda}}$})<\mathrm{length}(\ooalign{$ G_{\lambda}$\cr
	$\phantom{G}^{(\mathbf{m}_\mathbf{n})}$\cr
	$\phantom{G^{(\mathbf{m}_\mathbf{n})}_{\lambda}}$}) $ and $ \ooalign{$ F_{\lambda}$\cr
	$\phantom{F}^{(\mathbf{n})}$\cr
	$\phantom{F^{(\mathbf{n})}_{\lambda}}$}=\ooalign{$ E_{\lambda}$\cr
	$\phantom{E}^{(\mathbf{m}_\mathbf{n}){\scriptscriptstyle -}}$\cr
	$\phantom{E^{(\mathbf{m}_\mathbf{n}){\scriptscriptstyle -}}_{\lambda}}$} $ and
\[\textstyle \lVert\mathbf{m}_\mathbf{n}\rVert\le \frac{1}{-C_{\mathsf{E}}}\log (\frac{1}{C(\lambda)}\mathrm{length}(\ooalign{$ G_{\lambda}$\cr
	$\phantom{G}^{(\mathbf{n})}$\cr
	$\phantom{G^{(\mathbf{n})}_{\lambda}}$}))\mathrlap{{}\eqqcolon\kappa_\lambda(\mathbf{n}),} \]
where $ \mathrm{length}(\ooalign{$ G_{\lambda}$\cr
	$\phantom{G}^{(\mathbf{n})}$\cr
	$\phantom{G^{(\mathbf{n})}_{\lambda}}$})=C(\lambda)e^{-C_{\mathsf{E}}\kappa_\lambda(\mathbf{n})} $.
Because $ \lVert\mathbf{n}\rVert\le \kappa_\lambda(\mathbf{n}) $, for sufficiently small $ \lambda $,
\begin{align*}
\frac{|\ooalign{$ F_{\lambda}$\cr
	$\phantom{F}^{(\mathbf{n})}$\cr
	$\phantom{F^{(\mathbf{n})}_{\lambda}}$}-\ooalign{$ E_{\lambda}$\cr
	$\phantom{E}^{(\mathbf{n}){\scriptscriptstyle +}}$\cr
	$\phantom{E^{(\mathbf{n}){\scriptscriptstyle +}}_{\lambda}}$}|}{|\ooalign{$ E_{\lambda}$\cr
	$\phantom{E}^{(\mathbf{n}){\scriptscriptstyle +}}$\cr
	$\phantom{E^{(\mathbf{n}){\scriptscriptstyle +}}_{\lambda}}$}-\ooalign{$ E_{\lambda}$\cr
	$\phantom{E}^{(\mathbf{n}){\scriptscriptstyle -}}$\cr
	$\phantom{E^{(\mathbf{n}){\scriptscriptstyle -}}_{\lambda}}$}|}
\ge \frac{(\frac{c}{C_{\mathsf{H}}})^{1/h}}{C(\lambda)e^{-C_{\mathsf{E}}\kappa_\lambda(\mathbf{n})}\lVert\mathbf{m}_\mathbf{n}-\mathbf{n}\rVert^{{(t-1)}/{h}}}
\ge \frac{(\frac{c}{C_{\mathsf{H}}})^{1/h}}{C(\lambda)e^{-C_{\mathsf{E}}\kappa_\lambda(\mathbf{n})}(2\kappa_\lambda(\mathbf{n}))^{{(t-1)}/{h}}}.
\end{align*}
As a result, $\textstyle \lim_{\lambda\to 0}\tau(\Sigma_{\lambda})=\lim_{\lambda\to0}\inf_{\flsubstack{\mathbf{n}\ne\mathbf{0}\\\hphantom{\mathbf{n}}\mathllap{\ooalign{$\scriptstyle G_{\scriptscriptstyle\lambda}$\cr
	$\phantom{\scriptstyle G}^{\scriptscriptstyle(\mathbf{n})}$\cr
	$\phantom{\scriptstyle G^{\scriptscriptstyle(\mathbf{n})}_{\scriptscriptstyle\lambda}}$}}\ne\varnothing}}\tau(\Sigma_\lambda,\ooalign{$ G_{\lambda}$\cr
	$\phantom{G}^{(\mathbf{n})}$\cr
	$\phantom{G^{(\mathbf{n})}_{\lambda}}$})=+\infty$.
\end{proof}
The proof of Lemma~\ref{lem: Proof of lem: Proof of thm: Main Theorem(2)(3)(1)} is a combination of three components.
For simplicity, we restrict this discussion to the case $ b=1 $.
The first component is that the frequency $ \alpha $ satisfies a Diophantine condition.
Specifically, there exists $ c>0 $ and there exists $ t>1 $ such that
\[ |q\alpha-p|\ge \frac{c}{|q|^{t-1}} \]
for every $ \frac{p}{q}\in\mathbb{Q} $.
The second component is that the integrated density of states $ \mathfrak{N}_\lambda $ is Hölder continuous for small couplings.
Specifically, there exists $ C_{\mathsf{H}}>0 $ and there exists $ h>0 $ such that
\[
|\mathfrak{N}_\lambda(x)-\mathfrak{N}(y)|\le C_{\mathsf{H}}|x-y|^{h}
\]
for sufficiently small $ \lambda $ and for every $ x,y $ and the constants $ C_{\mathsf{H}} $ and $ h $ are independent of the sufficiently small $ \lambda $.
The existence of $ C_{\mathsf{H}} $ and $ h $ is established by A.~Cai, C.~Chavaudret, J.~You, and Q.~Zhou in 2019 \cite{2019HolderContinuity}; see Theorem~\ref{thm: Integrated Density of States(3)} for the unabridged statement.
The third component is that the lengths of spectral gaps exponentially decay for large gap-labels.
Specifically, there exists $ C(\lambda)>0 $ and there exists $ C_{\mathsf{E}}>0 $ such that
\[
\mathrm{length}(\ooalign{$ G_{\lambda}$\cr
	$\phantom{G}^{(n)}$\cr
	$\phantom{G^{(n)}_{\lambda}}$})\le C(\lambda)e^{-C_{\mathsf{E}}|n|}
\]
for sufficiently small $ \lambda $ and for every $ n\ne 0 $ and the constant $ C_{\mathsf{E}} $ is independent of the sufficiently small $ \lambda $ and the constant $ C(\lambda) $ satisfies $ \lim_{\lambda\to0}C(\lambda)=0 $.
Here $ \ooalign{$ G_{\lambda}$\cr
	$\phantom{G}^{(n)}$\cr
	$\phantom{G^{(n)}_{\lambda}}$} $ is a spectral gap satisfying $ \mathfrak{N}_{\lambda}\upharpoonright \overline{\ooalign{$ G_{\lambda}$\cr
	$\phantom{G}^{(n)}$\cr
	$\phantom{G^{(n)}_{\lambda}}$}\vphantom{G^{(n)}}}\equiv n\alpha-\lfloor n\alpha\rfloor $ for every $ n\ne 0 $.
The existence of $ C_{\mathsf{E}} $ and $ C(\lambda) $ is established by M.~Leguil, J.~You, Z.~Zhao, and Q.~Zhou in 2017 \cite{2017preprintGapEstimate}; see Theorem~\ref{thm: Spectral Gaps(7)} for the unabridged statement.
Then we combine the three components to deduce
\[
\frac{\mathrm{length}([E_\lambda,F_\lambda])}{\mathrm{length}(\ooalign{$ G_{\lambda}$\cr
	$\phantom{G}^{(n)}$\cr
	$\phantom{G^{(n)}_{\lambda}}$})}\ge \frac{|F_\lambda-E_\lambda|}{C(\lambda)e^{-C_{\mathsf{E}}|n|}}\ge \frac{(\frac{1}{C_{\mathsf{H}}})^{\frac{1}{h}}|\mathfrak{N}_\lambda(F_\lambda)-\mathfrak{N}_\lambda(E_\lambda)|{}^{\frac{1}{h}}}{C(\lambda)e^{-C_{\mathsf{E}}|n|}}
\]
for every $ n\ne 0,E_\lambda,F_\lambda\in\Sigma_\lambda $ with $ E_\lambda<F_\lambda $.
If $ \ooalign{$ G_{\lambda}$\cr
	$\phantom{G}^{(n_1)}$\cr
	$\phantom{G^{(n_1)}_{\lambda}}$} $ is left-adjacent to $ [E_\lambda,F_\lambda] $ and $ \ooalign{$ G_{\lambda}$\cr
	$\phantom{G}^{(n_2)}$\cr
	$\phantom{G^{(n_2)}_{\lambda}}$} $ is right-adjacent to $ [E_\lambda,F_\lambda] $, then $ \mathfrak{N}_\lambda(E_\lambda)=n_1\alpha-\lfloor n_1\alpha\rfloor $ and $\mathfrak{N}_\lambda(F_\lambda)=n_2\alpha-\lfloor n_2\alpha\rfloor$ and
\[\textstyle|\mathfrak{N}_\lambda(F_\lambda)-\mathfrak{N}_\lambda(E_\lambda)|\ge \frac{c}{|n_2-n_1|^{t-1}} \]
and
\[
\frac{\mathrm{length}([E_\lambda,F_\lambda])}{\mathrm{length}(\ooalign{$ G_{\lambda}$\cr
	$\phantom{G}^{(n)}$\cr
	$\phantom{G^{(n)}_{\lambda}}$})}\ge \frac{(\frac{c}{C_{\mathsf{H}}})^{{1}/{h}}}{C(\lambda)e^{-C_{\mathsf{E}}|n|}|n_2-n_1|^{{(t-1)}/{h}}}.
\]
Because the frequency is fixed and because the spectrum is the same for any phase, we define $ \Sigma_\lambda\coloneqq \Sigma_{\lambda,\alpha,\omega} $.
The spectral-thickness $ \tau(\Sigma_\lambda) $ is the infimum-length-ratio of planks over gaps; more details can be found within \textsc{subsection~\ref{subsec: Thickness and Gap Lemmas}}.
Therefore it can be similarly shown that
\[
\tau(\Sigma_\lambda)\ge \inf_{\flsubstack{n\ne0\\\hphantom{n}\mathllap{\ooalign{$\scriptstyle G_{\scriptscriptstyle\lambda}$\cr
	$\phantom{\scriptstyle G}^{\scriptscriptstyle(n)}$\cr
	$\phantom{\scriptstyle G^{\scriptscriptstyle(n)}_{\scriptscriptstyle\lambda}}$}}\ne\varnothing}}\frac{(\frac{c}{C_{\mathsf{H}}})^{1/h}}{C(\lambda)e^{-C_{\mathsf{E}}\kappa_\lambda(n)}(2\kappa_\lambda(n))^{{(t-1)}/{h}}},
\]
where $ \kappa_\lambda(n) $ is a positive real-valued function of $ n $.
Taking the limit as $ \lambda $ approaches zero concludes the proof of Lemma~\ref{lem: Proof of lem: Proof of thm: Main Theorem(2)(3)(1)}.
\begin{thm}[Lemma~\ref{lem: Proof of thm: Main Theorem(2)(3)}]\label{thm: Proof of lem: Proof of thm: Main Theorem(2)(3)(2)}
Let $ H_{\lambda,\alpha,\omega}=\Delta+V_{\lambda,\alpha,\omega} $ be the almost Mathieu operator.
Fix $ \alpha\in\mathbb{T}\setminus\mathbb{Q} $.
Assume $ \alpha $ satisfies a Diophantine condition:
\[\textstyle \alpha\in\bigcup_{\substack{c>0\\t>1}}\bigcap_{\frac{p}{q}\in\mathbb{Q}}\left\{x\in\mathbb{R}:|qx-p|\ge \frac{c}{|q|^{t-1}}\right\}\mathrlap{{}\eqqcolon \mathsf{DC}.} \]
Then $ \lim_{\lambda\to 0}\tau(\sigma(H_{\lambda,\alpha,\omega}))=+\infty $.
\end{thm}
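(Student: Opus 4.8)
The plan is to deduce Theorem~\ref{thm: Proof of lem: Proof of thm: Main Theorem(2)(3)(2)} from the abstract statement Lemma~\ref{lem: Proof of lem: Proof of thm: Main Theorem(2)(3)(1)} by verifying its three hypotheses in the case $ b=1 $. The key observation is that the potential of the almost Mathieu operator, $ V_{\lambda,\alpha,\omega}(n)=2\lambda\cos(2\pi(n\alpha+\omega)) $, has the form $ \lambda\,v\big|_{\mathbb{T}}(n\alpha+\omega) $ with $ v(\theta)=2\cos(2\pi\theta) $ a single fixed function that is entire---hence analytic on every strip $ \mathbb{T}+i(-r_0,r_0) $ and $ C^{\infty} $ (so $ r $-differentiable for every $ r\ge 550t $) on $ \mathbb{T} $, with all relevant norms independent of $ \lambda $. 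Thus every regularity hypothesis appearing in the cited theorems is satisfied automatically and uniformly in $ \lambda $, and only the $ \lambda $-scaling of the analytic norm needs to be tracked.

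\emph{Hypotheses (i) and (ii).} Since $ \alpha\in\mathsf{DC} $, fix $ c>0 $ and $ t>1 $ with $ |q\alpha-p|\ge c|q|^{-(t-1)} $ for every $ p/q\in\mathbb{Q} $; then $ \alpha\in\mathsf{DC}^{1}(c,t) $ with $ t>1=b $, which is hypothesis~(i) of Lemma~\ref{lem: Proof of lem: Proof of thm: Main Theorem(2)(3)(1)}. (The cited Theorems~\ref{thm: Integrated Density of States(3)} and~\ref{thm: Spectral Gaps(7)} phrase the Diophantine condition with $ |qx-2\pi p| $ and $ |qx-p| $ respectively; passing between these conventions is a harmless rescaling of the circle variable, and hence of the frequency, by a factor $ 2\pi $, which preserves the exponent $ t $ and merely alters the constant $ c $.) For hypothesis~(ii), apply Theorem~\ref{thm: Integrated Density of States(3)} with $ b=1 $, the fixed $ v $, and the $ \alpha $ just fixed: it yields $ \lambda_0>0 $ and $ C_{\mathsf{H}}>0 $ depending only on $ v,c,t $---in particular not on $ \lambda $---with $ |\mathfrak{N}_\lambda(x)-\mathfrak{N}_\lambda(y)|\le C_{\mathsf{H}}|x-y|^{1/2} $ for all $ 0<|\lambda|\le\lambda_0 $ and all $ x,y $. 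This is hypothesis~(ii) with $ h=\tfrac12\in(0,1] $.

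\emph{Hypothesis (iii).} Apply Theorem~\ref{thm: Spectral Gaps(7)} with $ b=1 $, a fixed pair $ r_0>r>0 $, the same $ \alpha $, and the $ \lambda $-dependent potential function $ v_\lambda=\lambda v $, whose analytic norm satisfies $ |v_\lambda|_{r_0}=|\lambda|\,|v|_{r_0} $ with $ |v|_{r_0} $ a finite constant depending only on $ r_0 $. For $ |\lambda| $ small enough that $ |v_\lambda|_{r_0}\le\varepsilon(1,c,t,r_0,r) $, the theorem gives $ |E^{(\mathbf{n})+}_{v_\lambda}-E^{(\mathbf{n})-}_{v_\lambda}|\le (|v_\lambda|_{r_0})^{2/3}e^{-2\pi r\lVert\mathbf{n}\rVert} $ for every $ \mathbf{n}\ne\mathbf{0} $, and the relevant gap-labeling convention ($ \mathfrak{N}_\lambda $ constant equal to $ \mathbf{n}\alpha-\lfloor\mathbf{n}\alpha\rfloor $ on the closure of the $ \mathbf{n} $-th gap) matches the one in Lemma~\ref{lem: Proof of lem: Proof of thm: Main Theorem(2)(3)(1)}. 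Setting $ C(\lambda)\coloneqq(|v|_{r_0}\,|\lambda|)^{2/3} $ and $ C_{\mathsf{E}}\coloneqq 2\pi r $, the constant $ C_{\mathsf{E}} $ is independent of $ \lambda $ and $ \lim_{\lambda\to 0}C(\lambda)=0 $, which is hypothesis~(iii).

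With hypotheses~(i)--(iii) established, Lemma~\ref{lem: Proof of lem: Proof of thm: Main Theorem(2)(3)(1)} gives $ \lim_{\lambda\to 0}\tau(\Sigma_\lambda)=+\infty $, where---since $ \alpha $ is irrational and the spectrum is $ \omega $-independent by Theorem~\ref{thm: Quasiperiodic Potentials and The Almost Mathieu Operator(2)}---we write $ \Sigma_\lambda\coloneqq\sigma(H_{\lambda,\alpha,\omega}) $. I expect no genuine conceptual obstacle: the hard analysis---H\"older regularity of the integrated density of states and exponential decay of the spectral-gap lengths in the perturbative Diophantine regime---is imported wholesale from Theorems~\ref{thm: Integrated Density of States(3)} and~\ref{thm: Spectral Gaps(7)}, while the elementary estimate turning these into a divergent lower bound for the thickness is exactly Lemma~\ref{lem: Proof of lem: Proof of thm: Main Theorem(2)(3)(1)}. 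The only steps demanding care are bookkeeping: reconciling the two slightly different normalizations of the Diophantine condition, and confirming the proportionality $ |v_\lambda|_{r_0}\asymp|\lambda| $ so that the prefactor $ C(\lambda)=(|v_\lambda|_{r_0})^{2/3} $ genuinely tends to $ 0 $.
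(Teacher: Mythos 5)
Your proposal is correct and follows essentially the same route as the paper: the paper's own proof simply observes that $\cos(z)$ is holomorphic and invokes Theorem~\ref{thm: Integrated Density of States(3)}, Theorem~\ref{thm: Spectral Gaps(7)}, and Lemma~\ref{lem: Proof of lem: Proof of thm: Main Theorem(2)(3)(1)} with $b=1$, exactly the three ingredients you verify. Your additional bookkeeping (the scaling $|v_\lambda|_{r_0}=|\lambda|\,|v|_{r_0}$ giving $C(\lambda)\to 0$, the exponent $h=\tfrac12$, and the normalization of the Diophantine condition) just makes explicit what the paper leaves implicit.
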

\begin{proof}[\textcolor{black}{Proof.}]\renewcommand{\qedsymbol}{\textcolor{black}{$ \blacksquare $}}%
Observe $ \cos(z) $ is holomorphic on $ \mathbb{C} $.
By Theorem~\ref{thm: Integrated Density of States(3)} for $ b=1 $ and Theorem~\ref{thm: Spectral Gaps(7)} for $ b=1 $ and Lemma~\ref{lem: Proof of lem: Proof of thm: Main Theorem(2)(3)(1)} for $ b=1 $, $ \left(\forall \alpha\in \mathsf{DC}\right)[\textstyle \lim_{\lambda\to 0}\tau(\sigma(H_{\lambda,\alpha,\omega}))=+\infty] $.
\end{proof}
\subsection*{Acknowledgements}
I want to thank Anton Gorodetski for his guidance and direction; this paper would not exist in its current form without the support.
I want to thank David Damanik and Svetlana Jitormirskaya for their feedback which resulted in a more polished version of this paper.
Along with the previous people mentioned, I want to thank Siegfried Beckus, Lingrui Ge, Ilya Kachovskiy, Abel Klein, and Wencai Liu  for the mathematics-related discussions that in some form or another positively influenced me and positively influenced this paper.

\end{document}